\newcommand{\R}{\mathbb R}
\newtheorem{theorem}{Theorem}[section]
\newtheorem{proposition}[theorem]{Proposition}
\newtheorem{remark}[theorem]{Remark}
\newtheorem{lemma}[theorem]{Lemma}
\newtheorem{corollary}[theorem]{Corollary}
\begin{document}
\title[Higher-order Hamiltonian Model]{Higher-order Hamiltonian Model for Unidirectional Water Waves}
\author{J. L. Bona}
\address{Department of Mathematics, Statistics and Computer Science, The University of Illinois at Chicago, 851 S. Morgan Street MC 249, 
Chicago, IL 60607 USA}
\email{jbona@uic.edu}
\author{X. Carvajal}
\address{Instituto de Matem\'atica, UFRJ, 21941-909, Rio de Janeiro, RJ, Brazil}
\email{carvajal@im.ufrj.br}
\author{M. Panthee}
\address{IMECC-UNICAMP\\
13083-859, Campinas, S\~ao Paulo, SP,  Brazil}
\email{mpanthee@ime.unicamp.br}
\author{M. Scialom}
\address{IMECC-UNICAMP\\
13083-859, Campinas, S\~ao Paulo,  SP, Brazil}
\email{scialom@ime.unicamp.br}

\keywords{Nonlinear dispersive wave equations, Water wave models, KdV equation, BBM equation, Cauchy problems, local \& global  well-posedness}
\subjclass[2010]{35A01, 35Q53}
\begin{abstract}
Formally second-order correct, mathematical descriptions of long-crested water waves propagating mainly in one direction are derived.  These equations are analogous to the first-order approximations of KdV- or BBM-type.  The advantage of these more complex equations  is that their solutions corresponding to physically relevant initial perturbations of the rest state may be accurate on a much longer time scale.  The initial-value problem for the class of equations that emerges from our derivation is then considered.  A local well-posedness theory is straightforwardly established by way of a contraction mapping argument.  A subclass of these equations possess a special Hamiltonian structure that implies the local theory can be continued indefinitely.  
\end{abstract}

\maketitle

\section{Introduction}
Long-crested water waves propagating shoreward are commonplace in the shallow water zone of large bodies of water.  Waves of this general form are easily generated in laboratory settings as well.  If  
a standard $xyz$--coordinate system is adopted in which $z$ increases in the direction opposite to which gravity acts, such waves are often taken to propagate 
along the $x$--axis, say in the direction of increasing values, and to be independent of the $y$-coordinate.   In this case, if dissipation and surface tension effects are ignored, the fluid assumed to be incompressible and the motion irrotational, the standard representation of the velocity field and  the free surface is provided by the Euler equations for the motion of a perfect fluid with the boundary behavior at the free surface determined by the Bernoulli condition.  On typical geophysical length scales, these equations provide reasonably good approximations of what is actually observed in nature.  In detail, this system has the form
\begin{equation} \label{euler}
\begin{cases}
 \Delta{\varphi} = 0,\qquad\qquad & 0 <  y < h_0 + \eta(x,t), \\
  \partial_y\varphi = 0, & y = 0,
 \\
 \partial_t  \eta =  \partial_y \varphi
  - \partial_x{\eta} \cdot \partial_x{\varphi}, & y = h_0 + \eta(x,t), 
   \\
‎ \partial_t \varphi = g\eta  - \frac{1}{2} (\partial_x\varphi)^2 - \frac{1}{2} (\partial_y \varphi)^2,
 & y = h_0 +  \eta(x,t). 
 \end{cases}
\end{equation}
Here, the bottom is taken to be flat, horizontal and located at $z = 0$, though theory
with a slowly varying bottom can easily be derived along the same lines (see \cite{min}). 
The undisturbed depth is $h_0$    
while the
dependent variable, $\eta = \eta(x, t)$ is the deviation of the free surface from its rest position 
$(x,h_0)$ at time $ t$.  
 Thus,  the  depth of the water column over
the spatial point $(x, 0)$ on the bottom, at time $t$ is $h(x, t) = h_0+\eta(x, t)$.
The dependent variable
 $\phi = \phi(x, y, t)$ is the velocity potential which is defined throughout the 
 flow domain, and whose existence owes to the
 fact that the fluid is  incompressible and irrotational.  
Hence,  $(u(x, z, t), v(x, z, t)) = \nabla\phi(x, z, t)$ is the velocity field at the point $(x, z)$ in the  flow domain at time $t$.  Here, $\nabla$ connotes the gradient 
with respect to the spatial variables only.   Of course, for this formulation to make sense, it must be the case that the free surface remains a graph over the bottom, a presumption that overlies the developments here. It deserves remark that the system (\ref{euler}) can be rewritten in a Hamiltonian form, as Zakharov \cite{Z} pointed out almost 50 years ago.
\\

Beginning already in the first half of the $19^{th}$--century, simpler  models have been posited, in part because the approximation using (\ref{euler}) is both analytically and computationally recalcitrant. Note in particular that the location of the free surface is part of the problem, so that two
boundary conditions at the free surface are needed for its determination. Observe also that
the temporal derivatives only appear in the boundary conditions, making the problem
further non-standard.  Moreover, the precision one might hope for from using the Euler equations is not always needed in practice.  If the input data has significant error, there may be little point in the higher accuracy afforded by the Euler system (\ref{euler}) as opposed to cruder approximations. 

 The largest steps forward in the $19^{th}$ century study of  approximate models were taken by Boussinesq in the 1870's (see especially his opus \cite{boussinesq}).   The coupled systems of equations which now bear his name are well known to theoreticians and they and their relatives find frequent use in practical situations (see, e.g. \cite{boczar}, \cite{min}).  In addition to the presumption that the wave motion is long-crested, so sensibly one-dimensional, they subsist on the assumption that the wave amplitudes and wavelengths encountered in the evolution are, respectively, small and large relative to the undisturbed depth $h_0$ of the liquid over the horizontal, featureless bottom. 
More precisely, their derivation needs that 
\begin{equation}\label{alpha-1}
\alpha = \frac{A}{h_0}\ll 1,  \qquad \beta = \frac{h_0^2}{l^2}\ll 1, \qquad S = \frac{\alpha}{\beta} = \frac{Al^2}{h_0^3}\approx 1.
\end{equation} 
Here, $A$ is a typical amplitude of the wave motion in question while $l$ is a 
typical wavelength.  The assumption that the Stokes' number 
$S = \frac{\alpha}{\beta}$ is of order one 
effectively means that nonlinear  and dispersive effects are balanced.
 Boussinesq also derived a model, now called the Korteweg--de Vries (KdV) equation, which was a specialization of the coupled systems, formally valid for waves traveling only in one direction, say in the direction of increasing values of $x$.  

 Almost a century later, Peregrine \cite{peregrine} and Benjamin {\it et al.} \cite{BBM} returned to Boussinesq's unidirectional model
\begin{equation}  \label{kdv}
\eta_t + \eta_x + \frac32 \eta\eta_x + \frac16 \eta_{xxx} = 0
\end{equation}
(the Korteweg-de Vries equation, commonly referred to as the KdV equation)  
and derived an equivalent version  known as the regularized long-wave equation (RLW-equation) or the BBM-equation.  In terms of the dependent variable $\eta(x,t)$, this equation takes the form
\begin{equation}\label{BBM}
\eta_t + \eta_x + \frac32 \eta\eta_x - \frac16 \eta_{xxt} = 0
\end{equation}
in the unscaled, non-dimensional variables
$$
x = \frac{1}{h_0} \, \bar{x}, \quad t = \sqrt{\frac{g}{h_0}} \, \bar{t} \quad {\rm and} \quad \eta = \frac{1}{h_0}\, \bar{\eta}.
$$
Here,  the constant $g$ is the acceleration due to gravity while $\bar{x}, \bar{t}$ and $\bar{\eta}$ are laboratory or field variables, all  measured in the unit of length consistent with 
the values of $h_0$ and $g$.  

Models like the BBM- and KdV-equation are known to provide good approximations of 
unidirectional solutions of the full 
water wave problem
 (\ref{euler}) on the so-called Boussinesq time scale, 
$\frac{1}{\beta} \approx \frac{1}{\alpha}$  (see \cite{AABCW}, \cite{BCL1}, \cite{BPS1}). They are also known to predict laboratory observations with reasonable accuracy on similar time scales (see \cite{BPS}, \cite{hammack}, \cite{HS}). 

 In some applications, notably coastal engineering and ocean wave modeling,  the waves need to be followed on time scales longer than the  Boussinesq time scale  (for example, see \cite{boczar} and references therein).  In such situations, a higher-order approximation to the water-wave problem might prove to be useful as it would be formally valid
 on the square $\frac{1}{\beta^2} \approx \frac{1}{\alpha^2} $ of
 the long, Boussinesq time scale.  Such models have appeared in the literature before (see  \cite{olver1, olver} for early examples).  
It is our purpose here to put forward a class of such higher-order correct, unidirectional evolution equations and to provide  analysis relating to the fundamental issue of Hadamard well-posedness for a subclass.   Models will be isolated
 that are not only a formally second-order correct approximation of the full, two-dimensional water wave problem, but also
possesses a Hamiltonian structure. As P. Olver pointed out in his pioneering work \cite{olver}, this helpful aspect is more difficult to attain in higher-order models 
that formally are faithful to the overlying Euler equations
 than in the first-order correct KdV or BBM models. Indeed, the fifth-order model appearing in \cite{olver} does not in fact have a Hamiltonian structure, as Olver 
points out.  

The notion of well-posedness which is featured here was
 put forward by Hadamard more than a century ago in a lecture the 
well known French mathematician gave at Princeton University 
(see \cite{hadamard}). In his conception, a problem is well-posed subject to given auxiliary data when there corresponds a unique solution which depends continuously on variations in the specified supplementary data. Hadamard points out that if the problem is lacking these properties, it will probably be useless in practical applications. Auxiliary data brought from real world situations typically features at least a small amount of error. If the model were to respond discontinuously
 to these small perturbations, the reproducibility of the model predictions
 in laboratory and field settings would be compromised and likewise their 
use in real situations would be suspect.

To clarify the role of the size restrictions (\ref{alpha-1}), it is often
 helpful to rescale the 
variables.  For example, in the context of equation (\ref{BBM}), 
change variables by letting $\eta \hookrightarrow \alpha \eta$, and $(x,t) \hookrightarrow \sqrt{\beta}(x,t)$.   In the 
new variables, $\eta$ and its first few partial derivatives with respect to $x$ and $t$ are presumed to be of order one and the equation takes 
the form
\begin{equation} \label{scaledbbm}
\eta_t + \eta_x + \frac32 \alpha \eta\eta_x - \frac16 \beta \eta_{xxt} = 0.
\end{equation}
In this scaling, the role of the small parameters is more apparent.  Moreover, 
the error term made in the approximation, which is set to zero in (\ref{scaledbbm}), 
is quadratic in the small parameters $\alpha$ and $\beta$.  Because of this latter 
aspect, even though the solution and its derivatives remains of order one, the ignored error can accumulate and have an order-one effect on the 
solution on a time scale of size $\frac{1}{\alpha^2} \approx \frac{1}{\beta^2}$, 
hence the need for a higher-order correct model if longer spatial distances are 
in question.

The starting point of our derivation of higher-order KdV-BBM-type 
equations is the paper \cite{BCS1} 
(and see also the earlier note \cite{min})  where a several-parameter variant of the classical Boussinesq system of two coupled equations was derived.  These Boussinesq systems are derived without the assumption of one-way propagation and can therefore countenance long-crested waves propagating in both directions. 
The theory in \cite{BCS1} assumes  incompressibility, irrotationality, long-crestedness and 
the size conditions enunciated in (\ref{alpha-1}). Boussinesq systems were formally derived at both first and second order in the small parameters $\alpha$ and 
$\beta$.   In dimensionless, scaled variables 
as appearing in (\ref{scaledbbm}), 
the family of formally first-order correct systems has  the form
\begin{equation}\label{b1.1}
\begin{cases}
\eta_t +w_x +\alpha(w\eta)_x + \beta\big(aw_{xxx}-b\eta_{xxt}\big)=0,\\
w_t +\eta_x +\alpha ww_x +\beta\big(c\eta_{xxx} -dw_{xxt}\big) =0.
\end{cases}
\end{equation}
 The variable $\eta$ is proportional to the deviation of the free surface from 
its rest position at the point $x$ at time $t$, as it was in (\ref{BBM}), while $w = w(x,t)$ is proportional to the horizontal velocity at a certain depth $z_0$, say, at  the point $(x,z_0,t)$ in the flow domain.  (The velocity $w$ is scaled by $\sqrt{gh_0}$ to make it non-dimensional, and then by $\alpha$ 
to make it of order one.)  
The constants $a, b, c$ and $d$ are not arbitrary.  They satisfy the relations
\begin{equation}\label{abcd}
\begin{cases}
a=\frac12\Big(\theta^2-\frac13\Big)\lambda, \qquad & b=\frac12\Big(\theta^2-\frac13\Big)(1-\lambda),\\
c=\frac12(1-\theta^2)\mu, \qquad \quad  &d=\frac12(1-\theta^2)(1-\mu),
\end{cases}
\end{equation} 
so that $a + b + c + d = \frac13$.  
In the same, order-one, independent and dependent variables, the second-order correct systems are
\begin{equation} \label{b1.2}
\begin{cases}\begin{split}
\eta_t &+w_x  +  \beta\big(a w_{xxx} -b\eta_{xxt}\big)+\beta^2\big(a_1w_{xxxxx} + b_1\eta_{xxxxt}\big) \\
&= -\alpha (\eta w)_x + \alpha\beta\big(b(\eta w)_{xxx} -(a+b-\frac13)(\eta w_{xx})_x\big),\\
w_t&+\eta_x+\beta \big(c\eta_{xxx}-d w_{xxt}\big) + \beta^2\big(c_1\eta_{xxxxx} +  d_1w_{xxxxt} \big) \\
&=-\alpha ww_x\!   +\!\alpha \beta\big((\!c\!+\!d) ww_{xxx}\!-\!c(ww_x)_{xx}
-\!(\eta\eta_{xx})_x\! +\!(\!c\!+\!d\!-\!1\!) w_xw_{xx}\big),
\end{split}
\end{cases}
\end{equation}
where the additional constants $a_1, b_1, c_1, d_1$ are 
 \begin{equation}\label{abcd1}
 \begin{cases} \begin{split}
a_1&= -\frac14\Big(\theta^2-\frac13\Big)^2(1-\lambda)+\frac5{24}\Big(\theta^2-\frac15\Big)^2\lambda_1,\\
 b_1& = -\frac5{24}\Big(\theta^2-\frac15\Big)^2(1-\lambda_1),\\
 c_1& =\frac5{24}(1-\theta^2)\Big(\theta^2-\frac15\Big)(1-\mu_1),\\
 d_1&=-\frac14\big(1-\theta^2\big)^2\mu-\frac5{24}(1-\theta^2)\Big(\theta^2-\frac15\Big)\mu_1.
 \end{split}
\end{cases}
\end{equation}
The parameter $\theta$ has physical significance.  It is determined by the height above the bottom at which the horizontal velocity is specified initially and whose evolution is being followed.  In the earlier notation, $\theta = 1-z_0$.    Because 
the vertical variable is scaled by the undisturbed depth $h_0$ in these descriptions, $\theta$ must lie in the interval $[0,1]$.  The other values, $\lambda, \mu, \lambda_1$ and $\mu_1$ are modeling parameters and
can  in principle take any real value.  Thus the coefficients appearing in the higher-order Boussinesq systems
form a restricted, eight-parameter family.   Notice that if terms
quadratic in $\alpha$ and $\beta$ are dropped, the second-order system (\ref{b1.2}) reduces to the first-order system (\ref{b1.1}). 

The velocity field in the rest of the flow is determined by an associated approximation of the velocity potential in the flow-domain.  The latter is derived from a knowledge of $w$ (see \cite{BCS1}, \cite{BCG}).

Local in time well-posedness of the Cauchy problem for the systems  (\ref{b1.1})  and (\ref{b1.2}) was studied in \cite{BCS1} and \cite{BCS2}.  Not 
all of these systems are even linearly well-posed.  Indeed, the recent foray \cite{AB} 
shows that many of those not linearly well-posed are in fact not locally well-posed 
when the nonlinearity is taken into account.   The fact that some of the family 
is ill-posed has the advantage of eliminating  
them from consideration when real-world 
approximation is the goal.  

These systems were further extended in \cite{BCL1} to include waves that are fully three-dimensional, and not just long-crested motions.   Rigorous estimates 
were also provided for the difference between solutions of the full water-wave problem and solutions of the first-order models.   A further extension of \cite{BCL1} is given in \cite{LS1}, where 
Boussinesq systems in the Kadomtsev-Petviashvili (KP) scaling are derived.  The latter situation is intermediate between the long-crested regime 
where transverse motion is ignored entirely and three-dimensional Boussinesq systems that allow strong transverse disturbance; a regime that 
is often referred to as allowing for weakly transverse long waves.  A detailed survey of  results of this sort can be found in J.-C. Saut's lecture notes \cite{Saut} or the recent monograph of Lannes \cite{DL}.

As hinted already, when long-crested waves are essentially moving 
in only one direction, one might prefer to use a unidirectional model 
because less auxiliary data is needed 
to initiate it. Theory developed in \cite{AABCW} has shown rigorously that predictions of first-order Boussinesq systems and those of their unidirectional
counterpart (\ref{BBM}) are the same to the neglected order, provided the wave motion is initiated unidirectionally.  This gives  rigorous credence to the utility of such unidirectional  models since 
the bidirectional models are known to be a good approximation of solutions of the full Euler
system in the Boussinesq regime of small amplitude and long wavelength.    

We stress that while the higher-order, unidirectional models put forward 
here are formally correct 
on the square of the Boussinesq time scale, no proof of this exists.  Indeed, considering the difficulty encountered in showing the first-order
correct, Boussinesq systems are faithful to the full, inviscid water wave problem 
\eqref{euler} on the Boussinesq time scale and showing the KdV-BBM approximations
\eqref{kdv}-\eqref{BBM} 
is true to their overlying Boussinesq system, 
a rigorous result for the systems derived here 
 on the square of the Boussinesq time scale is likely to be
challenging.  One can show that the higher-order terms do not do damage
to the original  KdV-BBM approximation of the full water-wave problem on the Boussinesq time scale, {\it  provided} 
sufficiently smooth initial data are countenanced.  This point is not addressed here 
as it would take us afield of the main developments.  It is also the case that one can 
show directly and rigorously that the linearized, higher-order, unidirectional model 
is faithful to the linearized Boussinsq system  on this very long time scale, again, provided the initial data has 
enough regularity.   However, these results are far from what one would like to have 
in hand.

The present contribution proceeds as follows.  In the next section, we derive formally from the second-order Boussinesq equations a class of second-order 
KdV-BBM-type equations.  
Also in the next section, function class notation is introduced and our main results about the higher-order, unidirectional models are stated.  Section \ref{sec4} provides  proofs of the results stated in Section \ref{sec3}, while Section \ref{sec5} features commentary about the choice 
of the parameters $\theta, \lambda, \mu, \lambda_1$, $ \mu_1$ and another 
parameter $\rho$ to be introduced presently.  
Section \ref{sec-5} is devoted to a discussion of the linear 
  dispersion relation. Finally, in Section \ref{sec-6}  some concluding remarks are recorded.

\section{Derivation of the Models and the Main Results}\label{sec2}

The formal derivation of a class of higher-order,  unidirectional equations, together with a precise statement of results about their well-posedness is the topic of this section.
\subsection{Model Equations}
The starting point is the collection (\ref{b1.2})
of higher-order  Boussinesq systems derived in \cite{BCS1}.   The parameters 
$a, b, \cdots c_1, d_1$ are those presented in
(\ref{abcd}) and (\ref{abcd1}).    As we are working in the Boussinesq regime where the Stokes' number $S = \frac{\alpha}{\beta} \approx 1$, 
the two small parameters $\alpha$ and $\beta$ are treated on an equal footing.  Thus, $O(\alpha) = O(\beta), \, O(\alpha\beta) = O(\beta^2)$, etc. 

In case the wave motion is essentially  in one direction, say in the direction of increasing values of $x$, we will show how to reduce such Boussinesq systems to the single,  fifth-order model,
\begin{equation}\label{eq.m1}
\begin{split}
\eta_t+\eta_x- \beta\gamma_1\eta_{xxt}+& \beta\gamma_2\eta_{xxx}+\beta^2 \delta_1 \eta_{xxxxt}+\beta^2 \delta_2 \eta_{xxxxx}+\alpha\frac34(\eta^2)_x\\
&+\,\alpha\beta\Big(\gamma(\eta^2)_{xxx}-\frac7{48}(\eta_x^2)_x \Big) -\alpha^2\frac18(\eta^3)_x=0.
\end{split}
\end{equation}
  The constants $\gamma_1$, $\gamma_2$, $\delta_1$, $\delta_2$ and $\gamma$ depend upon the parameters $a, b, \cdots$ in (\ref{b1.2}) and will be displayed presently.

Passage from the Boussinesq systems (\ref{b1.2}) to the unidirectional models (\ref{eq.m1}) follows the same line of argument as did the passage from the first-order system (\ref{b1.1}) to the mixed
KdV--BBM equations
\begin{equation} \label{kdvbbm}
\eta_t + \eta_x + \frac32 \alpha \eta \eta_x + \nu \beta \eta_{xxx} 
-\Big (\frac16 - \nu \Big)\beta \eta_{xxt} \, = \, 0,
\end{equation}
where $\nu =\frac12( a+c) = \frac14\big[\theta^2(\lambda - \mu) - \frac13 \lambda + \mu \big]$ depends upon $\theta, \lambda$ and $\mu$ and can formally take any real value.   (See \cite{AABCW}, \cite{CL} and, in the internal wave context, \cite{DV}.  A special case of this model may be 
 found in \cite{varlamov} for a moving boundary problem.)

As described in \cite{Bona}, at the lowest order of approximation wherein the parameters are small enough that even the first-order terms in  $\alpha$ and $\beta$ may be dropped, the system (\ref{b1.2}) becomes the one-dimensional wave equation, {\it viz.}
\begin{equation}\label{linear-1}
\begin{cases}
\eta_t +w_x=0,\qquad\quad
w_t+\eta_x =0,\\
\eta(x,0) =f(x), \qquad w(x,0) = g(x),
\end{cases}
\end{equation}
where $f(x)$ and $g(x)$ are the initial disturbances of the surface and the horizontal velocity, respectively. The solution to (\ref{linear-1}) is 
\begin{equation}
\begin{cases}
\begin{split}
\eta(x,t)= \frac12\big[f(x+t)+f(x-t)\big] - \frac12\big[g(x+t)-g(x-t)\big],\\ 
w(x,t)= \frac12\big[g(x+t)+g(x-t)\big] -\frac12\big[f(x+t)-f(x-t)\big].
\end{split}\end{cases}
\end{equation}
For the left-propagating component to vanish, one must have $f=g$, in which case  $\eta(x,t) =f(x-t)$,
$$
\eta_t + \eta_x = 0 \quad {\rm and} \quad w = \eta.
$$   
 Notice in particular that in the Boussinesq  regime, when most of the propagation is to the right, it 
appears that
\begin{equation}\label{dxdt}
\eta_t  =-\eta_x+O(\alpha, \beta), \quad  {\textrm{as}}\quad \alpha, \beta \to 0,
\end{equation}
a point that will play a significant role in what follows.  

At the next order when one keeps terms of first order in $\alpha$ and $\beta$, 
the standard {\it ansatz} used in \cite{AABCW} was 
that 
\begin{equation} \label{lower}
w = \eta + \alpha A + \beta B
\end{equation}
where $A = A(\eta, \cdots)$ and $B = B(\eta_{xx}, \eta_{xt}, \cdots)$ turn out to be simple polynomial functions of $\eta$ and its first few partial derivatives.  Indeed, substituting (\ref{lower}) into the first-order system (\ref{b1.1}) and dropping all terms of quadratic order in the 
small parameters $\alpha$ and $\beta$ leads to the pair
\begin{equation} \label{pair}
\begin{cases}
\begin{split}
\eta_t + \eta_x + \alpha A_x + \beta B_x +  \alpha (\eta^2)_x + \beta\big(a\eta_{xxx} - b \eta_{xxt}\big) \, = \, 0, \\
\eta_t + \alpha A_t + \beta B_t  + \eta_x + \alpha ww_x + \beta\big(c \eta_{xxx} - d \eta_{xxt} \big) \, = \, 0,
\end{split}
\end{cases}
\end{equation}
 of equations.  Demanding that these be consistent, and making use of the fact derived from (\ref{dxdt}) that $A_t = -A_x + O(\alpha, \beta)$ and
similarly for $B$, it is determined   that 
\begin{equation} \label{first-orderAB}
A = -\frac14 \eta^2 \qquad {\rm and} \qquad B = \frac12\Big((c-a )\eta_{xx} + (b-d)\eta_{xt}\Big).
\end{equation}
Using these relations in either of the equations in (\ref{pair}) leads to the KdV--BBM equations (\ref{kdvbbm}) with $\nu$ as advertised 
above.  

If one now again makes use  of the low-order relation (\ref{dxdt}) between $\partial_x$ and $\partial_t$, the equation (\ref{kdvbbm}) can be reduced 
further to the pure BBM-equation (\ref{scaledbbm}).  (The same equation can also be obtained by particular choices of $\theta, \lambda$ and $\mu$.)

It was shown in \cite{AABCW} that not only 
does this procedure lead formally to KdV--BBM-type equations of the form displayed in (\ref{kdvbbm}), but that if the Boussinesq system is initiated with data $(\eta_0, w_0)$ 
that satisfies (\ref{lower}), then its solution $(\eta,w)$ has $\eta$ well approximated by the solution $\eta_{BBM}$ of (\ref{scaledbbm}),  initiated 
with $\eta_0$, and the velocity $w$ that the Boussinesq system generates is shown to be well approximated by 
using the BBM-amplitude $\eta_{BBM}$ and the formula (\ref{lower}) to define a BBM-horizontal velocity $w_{BBM}$.

If a higher-order approximation is needed, then it is natural to posit the higher-order {\it ansatz}
\begin{equation}\label{eq1.3}
w = \eta +\alpha A+\beta B +\alpha\beta C+\beta^2D+\alpha^2E
\end{equation}
analogous to (\ref{lower}) (see, for example, \cite{dullin}, \cite{DL}).  The functions $A, B, C, D$  and $E$ will again turn out  to be polynomial functions of $\eta$ and its partial derivatives.  
It deserves remark that the presumption (\ref{eq1.3}) was already persued in \cite{olver1} and in subsequent publications, but the fifth-order 
partial differential equations that emerged do not have a Hamiltonian structure.  

Substituting  (\ref{eq1.3}) into the system (\ref{b1.2}) and ignoring terms that are 
at least cubic in the small
parameters $\alpha$ and $\beta$ leads to the pair of equations
\begin{equation}\label{system-s2}
\begin{cases}\begin{split}
\eta_t&=-\eta_x-\alpha A_x -\beta B_x -\alpha\beta C_x -\beta^2 D_x  -\alpha^2E_x+b\beta\eta_{xxt}-b_1\beta^2\eta_{xxxxt} -a\beta \eta_{xxx}\\
&\quad-a\alpha\beta A_{xxx}-a\beta^2B_{xxx}-a_1\beta^2\eta_{xxxxx} -(\alpha\eta^2+\alpha^2A\eta+\alpha\beta B\eta)_x \\
&\quad+ b\alpha\beta(\eta^2)_{xxx} -(a+b-\frac13)\alpha\beta(\eta \eta_{xx})_x,\\
\eta_t&=-\eta_x  -\alpha A_t-\beta B_t-\alpha\beta C_t-\beta^2 D_t-\alpha^2E_t+d\beta \eta_{xxt}+d\alpha\beta A_{xxt}+d\beta^2B_{xxt}\\
&\quad  - d_1\beta^2 \eta_{xxxxt}-c\beta\eta_{xxx}-c_1\beta^2\eta_{xxxxx} -\alpha \eta\eta_x-\alpha^2(\eta A)_x-\alpha\beta(\eta B)_x\\
 &\quad
 -c\alpha\beta(\eta\eta_x)_{xx} +(c+d)\alpha\beta \eta\eta_{xxx}-\alpha\beta(\eta\eta_{xx})_x +(c+d-1)\alpha\beta \eta_x\eta_{xx}.
\end{split}\end{cases}
\end{equation}
   Demanding that these two equations be consistent (at the first order) leads to the formulas 
(\ref{first-orderAB}) for $A$ and $B$ at order $\alpha$ and $\beta$, respectively, as one would expect.  
Our goal is to derive a fifth-order, one-way model
which, in addition to being Hamiltonian, has a linear dispersion relation
which matches that of the full water-wave system (\ref{euler}) up to and
including the order $\beta^2$ terms, so presenting an
 error which is formally of order $\beta^3$ (recall that $\alpha\approx \beta$ in 
the present development). 
The laboratory experiments reported in \cite{BPS1} make it clear that the 
error in the phase velocity dominates the overall error, at least for 
moderately sized waves.    Hence, getting the 
dispersion relation right to the
order we are working seems important.  Indeed, if the dispersion relation is 
not correct to order $\beta^2$, the model definitely is not second-order 
correct in the limit of very small values of $\alpha$ ({\it e.g.} linear theory).

It will be helpful to introduce an auxiliary parameter $\rho$, {\it viz.} 
\begin{equation}\label{B-1}
B=\frac12 (c-a+\rho)\eta_{xx} +\frac12(b-d+\rho)\eta_{xt}.
\end{equation}
Of course, at the first order, this is equivalent to the version with $\rho = 0$, but at the next order, $\rho$ can be chosen so that the resulting  second order, one-way model
has certain, desirable properties.  This will be discussed in more detail 
in Section 4.  
Of special interest will be the value
  \begin{equation}\label{correction-1}
\rho=b+d-\frac16.
\end{equation} 
This will turn out to be perspicuous, though we do not insist on it for the nonce.   

With this value of $B$, the  mixed KdV--BBM equation \eqref{kdvbbm} resulting from the first-order approximation turns out to be
\begin{equation} \label{kdvbbm-1}
\eta_t + \eta_x + \frac32 \alpha \eta \eta_x + \tilde\nu \beta \eta_{xxx} 
-\Big (\frac16 - \tilde\nu \Big)\beta \eta_{xxt} \, = \, 0,
\end{equation}
where $\tilde\nu =\frac12( a+c+\rho)$. Notice that if \eqref{correction-1} holds,
then $\tilde \nu = \frac{1}{12}$.  To  insist on the consistency of 
the two equations in \eqref{system-s2} at the second order in 
$\alpha$ and $\beta$, we use the approximation
\begin{equation} \label{kdvbbm-2}
\eta_t =-\eta_x - \frac32 \alpha \eta \eta_x -\tilde\nu \beta \eta_{xxx} 
+\Big (\frac16 - \tilde\nu \Big)\beta \eta_{xxt} +O(\alpha^2, \beta^2, \alpha\beta),  \quad  {\textrm{as}}\quad \alpha, \beta \to 0.
\end{equation}
 If one uses the forms of $A$ and $B$ given respectively in \eqref{first-orderAB} and \eqref{B-1} in the system  \eqref{system-s2} and the approximation \eqref{kdvbbm-2}, 
 there appear more terms involving order $\alpha\beta$, $\beta^2$ and $\alpha^2$. With this in mind, equating the terms of order $\alpha\beta$  in (\ref{system-s2}) leads to the equation
\begin{equation}\label{C-1}
C = \Big[\frac18(a+4b+2c-d)+\frac3{16}(a+b-c-d)+\frac38\rho\Big](\eta^2)_{xx} +\frac{13}{24}\eta\eta_{xx} +\frac{11}{48}\eta_x^2.
\end{equation}
Likewise, equating the terms containing  $\beta^2$  in (\ref{system-s2}) yields
 \begin{equation}\label{D-1}
 \begin{split}
D=&-\Big[\frac12(b_1-d_1)+\frac14(b-d+\rho)\big(a-d+\frac16\big)+\frac14 d(c-a+\rho)\Big]\eta_{xxxt}\\
&- \Big[\frac12(a_1-c_1)+\frac14(c-a+\rho)\big(a+\frac16\big)-\frac1{12}\rho\Big]\eta_{xxxx}.
\end{split}
\end{equation}
Finally, balancing the terms containing $\alpha^2$ in the system  (\ref{system-s2}), one obtains
\begin{equation}\label{E-1}
E= \frac18\eta^3.
\end{equation}

Putting the expressions for  $A$, $B$, $C$, $D$ and $ E$  in either equation in (\ref{system-s2}),
 using the relation (\ref{kdvbbm-2})  and taking note of the formula $\eta\eta_{xxx}= \frac12 (\eta^2)_{xxx}-\frac32(\eta_x^2)_x$, there appears the evolution equation
\begin{equation}\label{eq1.5}
\begin{split}
\eta_t+\eta_x& -\gamma_1\beta\eta_{xxt}+\gamma_2\beta\eta_{xxx}+\delta_1\beta^2\eta_{xxxxt}+\delta_2\beta^2\eta_{xxxxx}\\
&+
\frac32\alpha\eta\eta_x
+ \alpha\beta\Big(\gamma(\eta^2)_{xxx}
-\frac7{48}(\eta_x^2)_x\Big)-\frac18\alpha^2(\eta^3)_x=0,
\end{split}
\end{equation}
where
\begin{equation}\label{gamas}
\begin{cases}
\gamma_1=\frac12(b+d-\rho),\\
\gamma_2=\frac12(a+c+\rho),\\
\delta_1=  \frac14\big[2(b_1+d_1)-(b-d+\rho)\big(\frac16-a-d\big)-d(c-a+\rho)\big],\\
\delta_2=  \frac14\big[2(a_1+c_1) -(c-a+\rho)\big(\frac16-a\big)+\frac1{3}\rho\big],\\
\gamma  =\frac1{24}\big[5-9(b+d)+9\rho\big].
\end{cases}
\end{equation}
\vspace{.02cm}


\begin{remark}\label{rem-2.1}
  As our analysis so far has been predicated on the $abcd$-system \eqref{b1.2}, 
the relation 
 $a+b+c+d=\frac13$ has been used while calculating $C$ and $D$, and consequently the values of the parameters introduced in \eqref{gamas}.  In this 
situation, one readily obtains that $\gamma_1+\gamma_2 =\frac16$, $\gamma=\frac1{24}(5-18\gamma_1)$ and $\delta_2-\delta_1 = \frac{19}{360}-\frac16\gamma_1$ $($see \eqref{coefficient} below$ )$. 
Thus, equation \eqref{eq1.5} effectively has only two free parameters, namely
$\gamma_1$ and  $\delta_1 $. This aspect plays no particular role 
in the well-posedness theory to follow.  However, it does become important 
when the issue of insuring the system is Hamiltonian is addressed.  
 Detailed discussion of these issues 
may be found in Sections \ref{sec5} and \ref{sec-5}.

If instead, one were to relax  the relation $a+b+c+d=\frac13$ when
 computing $C$, $D$ and elsewhere, the resulting model would be
\begin{equation}\label{eq1.5-m}
\begin{split}
\eta_t+\eta_x& -\gamma_1\beta\eta_{xxt}+\gamma_2\beta\eta_{xxx}+\delta_1\beta^2\eta_{xxxxt}+\delta_2\beta^2\eta_{xxxxx}\\
&+
\frac32\alpha\eta\eta_x
+ \alpha\beta\Big(\sigma_1(\eta^2)_{xxx}
-\sigma_2(\eta_x^2)_x\Big)-\frac18\alpha^2(\eta^3)_x=0,
\end{split}
\end{equation}
where $\gamma_1$, $\gamma_2$ are as in \eqref{gamas},  $\delta_1$, $\delta_2$ satisfy the relation 
\begin{equation}\label{del2-del1}
\delta_2-\delta_1 = \frac14\rho(a+b+c+d)+\frac18\big[(b-d)^2-(a-c)^2\big] +\frac12(a_1-b_1+c_1-d_1)
\end{equation}
and $\sigma_1$,  $\sigma_2$ are given by 
\begin{equation}\label{sigmas}
\begin{cases}
\sigma_1=\frac1{24}\big[4+3(a-2b+c-2d)+9\rho\big],\\
\sigma_2=\frac1{48}\big[4+9(a+b+c+d)\big].
\end{cases}
\end{equation}

Note that the more general equation \eqref{eq1.5-m} reduces to 
\eqref{eq1.5} when  $a+b+c+d=\frac13$.  An in-depth 
analysis  of the general model \eqref{eq1.5-m} could be interesting. 
Such a more general model might arise if surface tension effects 
were taken into account in the original Boussinesq system.  
Depending upon
the undisturbed depth, another small parameter may arise in 
this situation and one must 
deal with its relation to $\alpha$ and $\beta$.  What the corresponding 
second-order correct model  looks like would depend upon how these 
parameters compare to one another.  This potentially interesting project 
is not pursued here.  Our focus remains upon  the one-way model 
\eqref{eq1.5} corresponding the 
the second-order water wave  system \eqref{b1.2} 
for which  dispersion considerations
mentioned earlier 
 demand that $a+b+c+d=\frac13$.

\end{remark}
\vspace{.2cm}

While the derivation is formal, we  expect the equation \eqref{eq1.5} to have the same sort of properties that its first-order 
correct analog (\ref{scaledbbm}) does as regards approximating unidirectional solutions of the second-order Boussinesq 
system (\ref{b1.2}) and, consequently, solutions of the full water wave problem.  However,  as already mentioned, rigorous theory to this effect is 
not  available as it is at first order.    

Models like \eqref{eq1.5} have appeared in the literature before ({\it cf.} 
\cite{dullin} when the surface tension is set to $0$ and 
the wide ranging article \cite{johnson} together with the references contained in these articles).  
For example, the equation (2.19) in \cite{dullin}, in the zero surface tension regime, 
appears in our class of equations (see the discussion in Sections 4 and 5).  

It is also worth note that if $\alpha = O(\beta^{\frac12})$ instead of 
$\alpha = O(\beta)$, 
then a Camassa-Holm type equation emerges, namely
\begin{equation*}
\begin{split}
\eta_t+\eta_x& -\gamma_1\beta\eta_{xxt}+\gamma_2\beta\eta_{xxx}\\
&+
\frac32\alpha\eta\eta_x
+ \alpha\beta\Big(\gamma(\eta^2)_{xxx}
-\frac7{48}(\eta_x^2)_x\Big)-\frac18\alpha^2(\eta^3)_x=0.
\end{split}
\end{equation*}
The two higher-order, linear, dispersive terms drop off because they 
are now negligible compared to the remaining terms.
  However, as one would expect for  models where the nonlinear
effects are more dominant,
the formal temporal range of validity for this model, in terms 
of the wavelength parameter $\beta$, 
is only of order $O(\beta^{-1})$.   That is to say, the formal 
error between the model predictions
 and those of the full water wave problem is of order $O(\beta^2 t)$.   
If the two fifth-order dispersive terms are left in place, then 
higher-order nonlinear terms deserve keeping as well to 
maintain a uniform level of approximation.   On the other hand, 
insofar as the largest part of the error resides in incorrect phase speeds, 
keeping these terms could be useful in practical situations, even in this
more nonlinear situation.  After all, the experiments in \cite{BPS1} 
show that BBM-type equations maintain engineering level approximation in the
long-wave regime, even for Stokes numbers in the mid-20's, which is to
say $\alpha/\beta \approx 25$.

For the analysis that follows, the small parameters $\alpha$ and $\beta$ are not relevant.   
Reverting to non-dimensional, but unscaled variables, which are denoted surmounted with a tilde, namely 
 $\tilde{\eta}(\tilde{x},\tilde{t}) = \alpha^{-1} \eta(\beta^{\frac12}\tilde{x},\beta^{\frac12}\tilde{t})$ 
and then
dropping the tildes yields the fifth-order, KdV--BBM-type equation
\begin{equation}\label{eq1.6}
\eta_t+\eta_x - \gamma_1\eta_{xxt}+\gamma_2\eta_{xxx}+\delta_1\eta_{xxxxt}+\delta_2\eta_{xxxxx}+\frac34(\eta^2)_x+\gamma (\eta^2)_{xxx}-\frac7{48}(\eta_x^2)_x -\frac18(\eta^3)_x=0.
\end{equation}
In many circumstances, boundary-value problems may be the most practically interesting.  However, one usually starts with  the pure initial-value problem to
get an idea of what may be true for more complicated problems.  This latter problem, wherein we search for a solution of (\ref{eq1.6}) subject 
to  $\eta(x,0)$ being specified for all $x \in \R$ will be the subject of further mathematical 
consideration.

We conclude this sub-section with ​the​ observation that  approximate models
 like the one displayed ​in \eqref{eq1.6} can be derived by expanding the 
Dirichlet-Neumann operator in the
Zakharov-Craig-Sulem formulation ​(​see​,​ for example​,​ \cite{DL} and the 
references therein​)​. 
An approach ​using the Dirichlet to Neumann operator does have as a component the 
rigorous theory pertaining to this operator.  And if one is expanding the Hamiltonian 
rather than the dependent variables themselves, one is guaranteed a Hamiltonian equation. However, it does not guarantee that the 
dispersion relation so obtained fits the full dispersion to the order of the terms being kept. Nor  does it  guarantee  that the resulting equation 
provides a well-posed problem.   A good 
example of what can go wrong appears in  \cite{ABN1} and   \cite{ABN2},  where this technique was applied to a deep-water situation.   Similar problems arise for
the Kaup-Boussinesq system, which is formally Hamiltonian, but is 
ill-posed even in smooth function classes (see \cite{AB}).

​The classical expansion used here ​allows ​for choices of​ parameters  that 
​guarantees both well-posedness and, in a special case, Hamiltonian structure.​ 
 It also has the advantage of
producing a model that behaves well with respect to the imposition of non-trivial boundary 
conditions (see \cite{hongqiu}).  ​

\subsection{Mathematical Theory} \label{sec3}


The equations (\ref{eq1.6}) above formally describes the propagation of uni-directional waves.   Naturally, one would like to have a theory that shows solutions of this system closely track associated solutions of the higher-order Boussinesq systems (\ref{b1.2}) on the
longer time scale $O\left(\frac{1}{\beta^2}\right)$.   Logically prior to such a result is the fundamental issue of the well-posedness of the Cauchy problem associated to (\ref{eq1.6}). It is to this latter issue that attention is now turned. 
To be useful in comparing the unidirectional model with its overlying bi-directional 
analog, one naturally needs  a well-posedness theory that is valid at least on the longer time scale $O(\frac1{\beta^2})$. Better still would be a global well-posedness theory so that issues of finite time singularity formation 
do not intrude upon the practical use of such models.

As mentioned earlier, the notion of well-posedness  used is the standard one.  
We say that the Cauchy problem for  an equation is locally well-posed in a 
Banach space $X$ of functions of 
the spatial variable if corresponding to given initial data in $X $  there exists a
 non-trivial time interval $ [0, T] $ and a unique continuous curve in $X$,
 defined at least for $t \in[0, T]$ that solves the equation in an appropriate sense.  
It is also demanded that 
this solution varies continuously with variations of the initial data. If the above properties are true for any bounded time interval, we say that the Cauchy problem is 
globally well-posed in $X$.

For the local well-posedness theory, it is important that the coefficients $\gamma_1$ and $\delta_1$ appearing respectively in front of the 
$\eta_{xxt}$ and $\eta_{xxxxt}$--terms be non-negative.
The problem is linearly ill-posed if this is not the case, as one can see by taking  the linear part of equation \eqref{eq1.8} in the next section.  
 (The special cases where $\delta_1 = 0$  is also locally 
well-posed, but will not be considered here.)
 It will be presumed henceforth that $\gamma_1 \geq 0$ and $ \delta_1 > 0$  to be the case.
Discussion of concrete conditions for this to be the case are forthcoming in Section \ref{sec5}.  Indeed, it will be shown that there are plenty of choices of the fundamental parameters $\theta, \lambda$, $\mu$, $\lambda_1$, $\mu_1$ and $\rho$    for which  $\gamma_1$, $ \delta_1$ are positive.

Local well-posedness will be obtained by using multilinear estimates combined with a contraction mapping argument. The local theory does not depend upon special choices of the parameters in the problem other than the positivity of $\gamma_1$
and $\delta_1$. In general, equation (\ref{eq1.6}) does not 
have an obvious Hamiltonian structure. However, by suitably 
choosing the parameters, it can be put into Hamiltonian form. 
The Hamiltonian structure allows one to infer bounds on solutions that
lead to global well-posedness.  As seen in the recent simulations of solutions of 
some of the first-order systems \cite{min1}, lack of Hamiltonian structure often 
seems to go along with lack of global well-posedness.  

While solutions of the system (\ref{eq1.6})  will not approximate solutions of the full water wave problem (\ref{euler}) without considerable smoothness (see \cite{BCL1}), a modern thrust in the analysis of dispersive partial differential equations is to provide local and global well-posedness theory in relatively large function classes. While mostly of mathematical interest, theory in such
low-regularity classes can be useful in the analysis of numerical schemes for approximating solutions of such equations, especially when the lower-order 
norms can be given time-independent bounds.

To obtain a global well-posedness result for initial data with lower order Sobolev regularity, we use a high-low frequency splitting technique.  Such splitting methods  have roots at least as far back as the work of M. Schonbek 
and her collaborators (see  \cite{schonbek1},   
\cite{schonbek2} for example).  In the context of BBM-type equations, it was applied in 
 \cite{BC} and \cite{BT} to obtain sharper well-posedness results.  More subtle splitting 
appears in the work of Bourgain (see, {\it e.g.} \cite{B} and the references therein, as well 
as the further developments in 
 \cite{FLP1}, \cite{FLP2} for example.)


Before announcing the main results, the mostly standard 
 notation that will be used throughout is recorded.  If $f$ is a function 
defined on the real line $\R$, then $\hat{f}$ denotes  its Fourier transform, 
namely
$$\hat{f}(\xi) = \frac1{\sqrt{2\pi}}\int_{\R}e^{-ix\xi}f(x)dx.$$  The space of square-integrable, measurable functions defined on a
measurable subset $\Omega$ of Euclidean space will be denoted $L^2(\Omega)$.  In fact, throughout, $\Omega$ will always 
be an interval in the real line $\R$ or a Cartesian product of two such intervals in $\R^2$.  
The $L^2(\R)$-based Sobolev space of order $s \in \R$  will be denoted by $H^s = H^s(\R) = (1-\Delta)^{-s/2}L^2$ 
as usual.  
 If $f:\R\times [0, T] \to \R$, 
 the mixed
 $L_T^q L_x^p$-norm of $f$ is 
\begin{equation*}
\|f\|_{L_T^q L_x^p} = \left(\int_0^T \left(\int_{\R} |f(x, t)|^p\,dx
\right)^{q/p}\,dt\right)^{1/q},
\end{equation*}
with the usual modification when $p$ or $q$ is $\infty$. An analogous definition is used for the other mixed norms $L_x^pL_T^q$,
with the order of integration in time and space interchanged.
In the notation $L_x^pL_T^q$  or $L_T^pL_x^q$, $T$ is replaced by $t$
 when the interval $[0, T]$ is instead the  whole real line $\R$.
For $T>0$ and $s\in \R$, $ C([0,T];H^s)$  denotes the space  of continuous maps from 
$[0,T]$ to $H^s$ with its usual norm,
$\|u\|_{C([0,T];H^s)}:= \sup_{t\in [0, T]}\|u(x, \cdot)\|_{H^s}$.

We use $c$ or $C$ to denote various  space- and time-independent constants whose exact values  may
 vary from one line to the next. 
The notation $ A \lesssim B$ connotes an estimate of the form $A\leq  cB$  for some $c$, while $A \sim B$ means $ A \lesssim B$  and $B \lesssim A$.  The notation $a+$ 
stands for 
 $a +\epsilon$  for  any $\epsilon > 0$, no matter how small.

Here are the main results. The first one is about the local well-posedness in $H^s (\R$), $s \geq 1$.

\begin{theorem}\label{mainTh1}  Assume $\gamma_1, \delta_1 >0$.  
For any $s\geq 1$ and for given  $\eta_0\in H^s(\R)$, there exist a time $T =T(\|\eta_0\|_{H^s})$
 and a unique function  $\eta \in C([0,T];H^s)$ which 
 is a solution of the IVP for (\ref{eq1.6}), posed with initial data $\eta_0$.  The solution $\eta$ 
varies continuously in $C([0,T];H^s)$ as $\eta_0$ varies in $H^s$.
\end{theorem}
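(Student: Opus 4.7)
The plan is to recast the Cauchy problem as a fixed point equation via Duhamel's formula and apply the standard contraction mapping argument. First I would write the equation in the form
\begin{equation*}
\varphi(D)\eta_t = -(\partial_x + \gamma_2 \partial_x^3 + \delta_2 \partial_x^5)\eta - \tfrac34(\eta^2)_x - \gamma(\eta^2)_{xxx} + \tfrac{7}{48}(\eta_x^2)_x + \tfrac18(\eta^3)_x,
\end{equation*}
where $\varphi(D)$ is the Fourier multiplier with symbol $\varphi(\xi) = 1 + \gamma_1 \xi^2 + \delta_1 \xi^4$. The assumption $\gamma_1 \geq 0$, $\delta_1 > 0$ ensures $\varphi(\xi) \geq c(1+\xi^4)$, so $\varphi(D)^{-1}$ is bounded on every $H^s$. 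Inverting $\varphi(D)$ makes the linear operator
\begin{equation*}
A = -\varphi(D)^{-1}(\partial_x + \gamma_2 \partial_x^3 + \delta_2 \partial_x^5)
\end{equation*}
a Fourier multiplier with purely imaginary symbol; hence $A$ is skew-adjoint on $L^2$ and $\{e^{tA}\}_{t\in \R}$ is a unitary group that acts as an isometry on each $H^s$. The Duhamel formulation becomes
\begin{equation*}
\eta(t) = e^{tA}\eta_0 - \int_0^t e^{(t-t')A}\, \varphi(D)^{-1} F(\eta(t'))\, dt', \qquad F(\eta)=\tfrac34(\eta^2)_x + \gamma(\eta^2)_{xxx} - \tfrac{7}{48}(\eta_x^2)_x - \tfrac18(\eta^3)_x.
\end{equation*}

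Next I would establish the key nonlinear estimate: for every $s\geq 1$ and every $u,v \in H^s(\R)$,
\begin{equation*}
\|\varphi(D)^{-1} F(u)\|_{H^s} \lesssim \|u\|_{H^s}^2 + \|u\|_{H^s}^3, \qquad \|\varphi(D)^{-1}(F(u)-F(v))\|_{H^s} \lesssim (1+\|u\|_{H^s}+\|v\|_{H^s})^2\,\|u-v\|_{H^s}.
\end{equation*}
The crucial observation is that the symbols of $\varphi(D)^{-1}\partial_x$ and $\varphi(D)^{-1}\partial_x^3$ are bounded by $(1+|\xi|)^{-3}$ and $(1+|\xi|)^{-1}$ respectively, so these operators gain three, respectively one, derivative. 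Thus for the algebra terms $(\eta^2)_x$, $(\eta^2)_{xxx}$ and $(\eta^3)_x$, one uses that $H^s$ is a Banach algebra for $s>1/2$ to reduce to $\|\eta^2\|_{H^s}\lesssim \|\eta\|_{H^s}^2$ and $\|\eta^3\|_{H^s}\lesssim \|\eta\|_{H^s}^3$. For the delicate term $(\eta_x^2)_x$ at the endpoint $s=1$, the three-derivative smoothing gives
\begin{equation*}
\|\varphi(D)^{-1}(\eta_x^2)_x\|_{H^1} \lesssim \|\eta_x^2\|_{H^{-2}} \lesssim \|\eta_x^2\|_{L^1} \lesssim \|\eta_x\|_{L^2}^2 \lesssim \|\eta\|_{H^1}^2,
\end{equation*}
using $L^1 \hookrightarrow H^{-r}$ for $r>1/2$. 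For $s>1$ one interpolates, or uses the algebra property once $s>3/2$.

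With these bounds in hand, I would define the map
\begin{equation*}
\Psi(\eta)(t) = e^{tA}\eta_0 - \int_0^t e^{(t-t')A}\, \varphi(D)^{-1} F(\eta(t'))\, dt'
\end{equation*}
on the closed ball $B_R = \{\eta \in C([0,T];H^s) : \|\eta\|_{C([0,T];H^s)} \leq R\}$ with $R = 2\|\eta_0\|_{H^s}$. Because $\{e^{tA}\}$ is an isometry on $H^s$, Minkowski's inequality together with the nonlinear estimate yields $\|\Psi(\eta)\|_{C([0,T];H^s)} \leq \|\eta_0\|_{H^s} + C T(R^2+R^3)$ and $\|\Psi(\eta)-\Psi(\zeta)\|_{C([0,T];H^s)} \leq C T(1+R)^2\|\eta-\zeta\|_{C([0,T];H^s)}$. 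Choosing $T = T(\|\eta_0\|_{H^s}) > 0$ so that $CT(R+R^2) \leq 1/2$ and $CT(1+R)^2 \leq 1/2$ makes $\Psi$ a contraction of $B_R$ into itself, yielding a unique fixed point $\eta \in C([0,T];H^s)$. Continuous dependence on $\eta_0$ follows from applying the contraction estimate to solutions emanating from neighbouring data via a Gronwall argument.

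The main obstacle is obtaining the nonlinear estimate for the $(\eta_x^2)_x$ contribution at the threshold $s=1$, since $\eta_x$ lives only in $L^2$ and Sobolev algebra methods are unavailable there; everything hinges on the three-derivative smoothing provided by $\varphi(D)^{-1}\partial_x$, which in turn depends essentially on $\delta_1>0$. Once this term is controlled, the other nonlinearities and the contraction closure are routine, and the argument is uniform in $s\geq 1$.
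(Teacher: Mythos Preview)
Your overall strategy is exactly that of the paper: rewrite the equation via Duhamel's formula using the unitary group generated by the skew-adjoint multiplier, then close a contraction in $C([0,T];H^s)$ by estimating each nonlinear term after the smoothing operator $\varphi(D)^{-1}$ is applied. The structure and the choice of $R=2\|\eta_0\|_{H^s}$, $T\sim (R+R^2)^{-1}$ are the same.

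The one substantive difference is in how the multilinear estimates are obtained. The paper does not rely on the Sobolev algebra property nor on the $L^1\hookrightarrow H^{-r}$ embedding; instead it invokes the sharp Bona--Tzvetkov bilinear lemma
\[
\Big\|\frac{|\xi|}{1+\xi^2}\,\widehat{uv}\Big\|_{H^s}\lesssim\|u\|_{H^s}\|v\|_{H^s},\qquad s\ge 0,
\]
and observes that both $\tau(\xi)$ and $\psi(\xi)$ are dominated by $|\xi|/(1+\xi^2)$ (times a lower-order factor). Applied with $u=v=\eta_x$, this gives the crucial bound $\|\psi(\partial_x)\eta_x^2\|_{H^s}\lesssim\|\eta_x\|_{H^{s-1}}^2\le\|\eta\|_{H^s}^2$ uniformly for every $s\ge 1$, with no case splitting. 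Your more elementary route also works, but your remark ``for $s>1$ one interpolates'' is both unnecessary and the weakest point of your write-up: nonlinear interpolation would need justification, whereas in fact your own $L^1\hookrightarrow H^{-r}$ argument already covers the entire range $1\le s<5/2$ (since $3-s>1/2$ there), and the algebra argument takes over for $s>3/2$, so the two cases overlap and no interpolation is needed. If you simply state the $L^1$ estimate for all $s$ with $s-3<-1/2$, your proof is complete and correct.
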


With more regularity and a further restriction on the coefficients of the equation, global well-posedness holds, as the next theorem attests.

\begin{theorem}\label{mainTh3}  Assume $\gamma_1, \delta_1 > 0$.  
 Let $s\geq \frac32$ and $\gamma=\frac7{48}$.  Then  the solution to the IVP associated to (\ref{eq1.6})  given by Theorem \ref{mainTh1} can be extended to arbitrarily large time intervals $[0, T]$.  Hence the problem is globally well-posed in this case.
\end{theorem}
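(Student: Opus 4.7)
The plan is to identify a conserved quantity equivalent to the $H^2$-norm and use it to extend the local solutions provided by Theorem \ref{mainTh1}. I would begin by multiplying equation \eqref{eq1.6} by $\eta$ and integrating over $\R$. Integration by parts converts the three time-derivative terms into
$$\frac{d}{dt}\int_{\R}\left(\tfrac12\eta^2+\tfrac{\gamma_1}{2}\eta_x^2+\tfrac{\delta_1}{2}\eta_{xx}^2\right)dx.$$
The linear odd-order dispersive contributions $\eta_x$, $\gamma_2\eta_{xxx}$, $\delta_2\eta_{xxxxx}$ all integrate to zero against $\eta$, while the polynomial nonlinearities $\tfrac{3}{4}(\eta^2)_x$ and $-\tfrac{1}{8}(\eta^3)_x$ are exact derivatives of $\eta^3$ and $\eta^4$ and so contribute nothing. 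What remains is the mixed nonlinear-dispersive block, which is precisely where the hypothesis on $\gamma$ must be used.

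The crucial cancellation comes from a brief integration-by-parts calculation,
$$\int_{\R}\eta(\eta^2)_{xxx}\,dx \;=\; 2\int_{\R}\eta\,\eta_x\,\eta_{xx}\,dx \;=\; \int_{\R}\eta(\eta_x^2)_x\,dx,$$
so the mixed term $\gamma(\eta^2)_{xxx}-\tfrac{7}{48}(\eta_x^2)_x$ contributes $(2\gamma-\tfrac{7}{24})\int_{\R}\eta\,\eta_x\,\eta_{xx}\,dx$ to the energy identity, which vanishes exactly when $\gamma=\tfrac{7}{48}$. Together with $\gamma_1>0$ and $\delta_1>0$, this shows that
$$E(\eta)\;:=\;\int_{\R}\left(\tfrac12\eta^2+\tfrac{\gamma_1}{2}\eta_x^2+\tfrac{\delta_1}{2}\eta_{xx}^2\right)dx$$
is conserved and equivalent to $\|\eta\|_{H^2}^2$, which gives the uniform a priori bound $\|\eta(\cdot,t)\|_{H^2}\lesssim\|\eta_0\|_{H^2}$ for smooth solutions.

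To make the conservation rigorous at the threshold regularity $s\geq\tfrac{3}{2}$ offered by the local theory, I would apply a Bona--Smith style approximation: mollify $\eta_0\in H^s$ to a sequence $\eta_0^{\epsilon}\in H^{\infty}$, solve the Cauchy problem via Theorem \ref{mainTh1} on a common time interval, use the classical conservation identity for each $\eta^{\epsilon}$, and pass to the limit in $C([0,T];H^s)$ via the continuous dependence on the data. The uniform $H^2$-bound transfers to $\eta$. Global extension now follows from Theorem \ref{mainTh1}: since the local existence time depends only on $\|\eta_0\|_{H^s}$ and this quantity is controlled by $\|\eta\|_{H^2}$ for $s\in[\tfrac{3}{2},2]$, the local solution can be iterated onto any interval $[0,N]$. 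For $s>2$, a persistence-of-regularity estimate—standard for BBM-type equations, since $\mathcal{M}=1-\gamma_1\partial_x^2+\delta_1\partial_x^4$ has symbol growing like $\xi^4$ and makes $\mathcal{M}^{-1}$ strongly smoothing—propagates higher Sobolev regularity so long as the $H^2$-norm is under control.

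The main obstacle I anticipate is the rigorous passage to the limit in the energy identity at the threshold regularity. At $s=\tfrac{3}{2}$ the cubic integrand $\eta\,\eta_x\,\eta_{xx}$ sits at the boundary of Sobolev validity (one may reduce it to $-\tfrac12\int\eta_x^3\,dx$, which is finite thanks to $H^{1/2}\hookrightarrow L^3$ in one dimension), and the approximation must be arranged so that regularization errors vanish in the limit without disturbing the exact coefficient balance that produces the cancellation at $\gamma=\tfrac{7}{48}$. Once this bookkeeping is in place, the remainder of the argument follows a familiar template from the BBM-type literature.
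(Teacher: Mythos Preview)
Your argument correctly identifies the conserved quantity $E(\eta)$ and the cancellation at $\gamma=\tfrac{7}{48}$, and it essentially reproduces the paper's proof of global well-posedness in $H^s$ for $s\ge 2$ (the paper's Theorem \ref{mainTh2}).  However, the step where you write ``The uniform $H^2$-bound transfers to $\eta$'' fails when $\eta_0\in H^s$ with $\tfrac32\le s<2$.  If $\eta_0\notin H^2$, then $E(\eta_0)=+\infty$; mollifying to $\eta_0^\epsilon\in H^\infty$ gives solutions $\eta^\epsilon$ with $E(\eta^\epsilon)=E(\eta_0^\epsilon)\sim \epsilon^{2(s-2)}\to\infty$ as $\epsilon\downarrow 0$, so no uniform $H^2$-bound survives the limit.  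The Bona--Smith approximation transfers conservation laws to the limit only when the conserved quantity is finite on the limiting data, which is not the case here.  Consequently your iteration argument collapses: you have no control on $\|\eta(\cdot,t)\|_{H^s}$ for $s<2$, and the local existence time cannot be uniformly bounded below.

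The paper closes this gap by a genuinely different mechanism: a high--low frequency splitting $\eta_0=v_0+w_0$ with $v_0\in H^s$ small and $w_0\in H^\infty$.  The small rough piece $v$ is evolved by the full equation; smallness of $\|v_0\|_{H^s}$ forces the local theory to persist on the entire prescribed interval $[0,T]$.  The smooth piece $w$ is evolved by a variable-coefficient equation (with $v$ as coefficient) for which an $H^2$ energy estimate holds; the resulting Gronwall-type inequality is controlled by the product $\|w_0\|_{H^2}\,\|v_0\|_{H^1}=O(\epsilon^{s-2})\cdot o(\epsilon^{s-1})=o(\epsilon^{2s-3})$, which stays bounded precisely when $s\ge\tfrac32$.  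This is where the threshold $\tfrac32$ actually arises, and it is the missing idea in your proposal.
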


\section{Well-posedness Theory in $H^s$, $s\geq 1$}\label{sec4}

 Local well-posedness will be established using multilinear estimates combined with a contraction mapping argument.  Global well-posedness
in the spaces $H^s$ with $s \geq 2$ is obtained via energy-type arguments together with the local theory.  For values of $s$ below $2$, the 
global theory results from splitting the initial data into a small, rough part and a smooth part and writing evolution equations for each 
of these in such a way that the sum of the results of the separate evolutions provides a solution of the original problem.

\subsection{Local well-posedness}

This section will focus upon local well-posedness issues for the Cauchy problem associated to (\ref{eq1.6})
for given data  $\eta(x,0) =\eta_0(x)$ in $H^s(\R)$.  The first step is to write (\ref{eq1.6}) in an equivalent 
integral equation format. 
Taking the Fourier transform of equation (\ref{eq1.6}) with respect to the spatial variable yields
\begin{equation}\label{eq1.7}
\widehat\eta_t +i\xi\widehat\eta +\gamma_1 \xi^2\widehat\eta_t  - i \gamma_2 \xi^3\widehat\eta+\delta_1\xi^4\widehat\eta_t +\delta_2i\xi^5\widehat\eta+ \frac34 i\xi\widehat{\eta^2} -\gamma i\xi^3\widehat{\eta^2} -\frac18 i\xi \widehat{\eta^3} -\frac7{48} i\xi \widehat{\eta_x^2}=0, 
\end{equation}
or what is the same,
\begin{equation}\label{eq1.8}
\Big(1+\gamma_1\xi^2+\delta_1 \xi^4\Big)i\widehat\eta_t =\xi(1-\gamma_2\xi^2+\delta_2\xi^4)\widehat\eta+\frac14(3\xi-4\gamma\xi^3)\widehat{\eta^2}  -\frac18 \xi \widehat{\eta^3} -\frac7{48}\xi \widehat{\eta_x^2}.
\end{equation}

Because $\gamma_1,  \delta_1$ are taken to be positive, the fourth-order polynomial 
\begin{equation}\label{def-phi}
 \varphi(\xi) := 1 + \gamma_1\xi^2+\delta_1\xi^4,
\end{equation}
is strictly positive.  Define the three Fourier multiplier operators $\phi(\partial_x)$, $\psi(\partial_x)$ and $\tau(\partial_x)$ via
their symbols, {\it viz.}
\begin{equation}\label{phi-D}
\widehat{\phi(\partial_x)f}(\xi):=\phi(\xi)\widehat{f}(\xi), \qquad \widehat{\psi(\partial_x)f}(\xi):=\psi(\xi)\widehat{f}(\xi) \;\;\; {\rm and }\;\;\; \widehat{\tau(\partial_x)f}(\xi):=\tau(\xi)\widehat{f}(\xi), 
\end{equation}
 where
\begin{equation}
  \phi(\xi)=\frac{\xi(1-\gamma_2\xi^2+\delta_2\xi^4)}{\varphi(\xi)}, \quad \psi(\xi)=\frac{\xi}{\varphi(\xi)} \quad  {\rm and} \quad \tau(\xi)=\frac{3\xi-4\gamma\xi^3}{4\varphi(\xi)}.
\end{equation}

With this notation, the Cauchy problem associated to  equation (\ref{eq1.6}) can be written in the form
\begin{equation}\label{eq1.9}
\begin{cases}
i\eta_t = \phi(\partial_x)\eta + \tau (\partial_x)\eta^2 - \frac18\psi(\partial_x)\eta^3  -\frac7{48}\psi(\partial_x)\eta_x^2\, ,\\
 \eta(x,0) = \eta_0(x).
 \end{cases}
\end{equation}
Consider first the linear IVP 
\begin{equation}\label{eq1.10}
\begin{cases}
i\eta_t = \phi(\partial_x)\eta,\\
\eta(x,0) = \eta_0(x),
\end{cases}
\end{equation}
whose solution is given  by $\eta(t) = S(t)\eta_0$, where $\widehat{S(t)\eta_0} = e^{-i\phi(\xi)t}\widehat{\eta_0}$ is defined via its Fourier transform.
Clearly, $S(t)$ is a unitary operator on $H^s$ for any $s \in \R$, so that
\begin{equation}\label{eq1.11}
\|S(t)\eta_0\|_{H^s} = \|\eta_0\|_{H^s},
\end{equation}
for all $t > 0$.
Duhamel's formula allows us to write the IVP  (\ref{eq1.9}) in the equivalent integral equation form,
\begin{equation}\label{eq1.12}
\eta(x,t) = S(t)\eta_0 -i\int_0^tS(t-t')\Big(\tau(\partial_x)\eta^2 - \frac18 \psi(\partial_x)\eta^3 -\frac7{48}\psi (\partial_x)\eta_x^2\Big)(x, t') dt'.
\end{equation}

In what follows, a short-time solution of (\ref{eq1.12}) will be obtained via the contraction mapping principle in the space $C([0,T];H^s)$.  This will provide a 
proof of Theorem \ref{mainTh1}.

\subsubsection{Multilinear Estimates}

 Various multilinear estimates are now established that will be useful in the proof of the local well-posedness result.
  First, we record the  following ``sharp" bilinear estimate obtained in \cite{BT}.

\begin{lemma}\label{BT1}
 For $s \ge 0$, there is a constant $C = C_s$ for which
\begin{equation}\label{bt}
\|\omega(\partial_x) (u v)\|_{H^s} \le C\|u\|_{H^s}\|v\|_{H^s}
\end{equation}
 where $\omega(\partial_x)$  is the Fourier multiplier operator 
with symbol
\begin{equation} \label{btx0}
\omega(\xi) \, = \, \frac{|\xi|}{1 + \xi^2}.
\end{equation} 
\end{lemma}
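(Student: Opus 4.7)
The plan is to establish the estimate first in the base case $s=0$ by exploiting a size property of the multiplier $\omega$, and then to bootstrap to general $s\ge 0$ by a standard frequency-side splitting. The crucial observation is that $\omega(\xi)=|\xi|/(1+\xi^{2})$ belongs to $L^{2}(\R)$, since
$$\|\omega\|_{L^{2}}^{2}=\int_{\R}\frac{\xi^{2}}{(1+\xi^{2})^{2}}\,d\xi<\infty.$$
The decay $\omega(\xi)\sim|\xi|^{-1}$ at infinity is exactly the borderline behavior compatible with square integrability, which is consistent with the authors' description of the estimate as sharp.

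At $s=0$, Plancherel followed by H\"older yields
$$\|\omega(\partial_{x})(uv)\|_{L^{2}}=\|\omega\,\widehat{uv}\|_{L^{2}}\le \|\omega\|_{L^{2}}\,\|\widehat{uv}\|_{L^{\infty}}.$$
Writing $\widehat{uv}$ as a constant multiple of $\hat u*\hat v$ and applying Cauchy--Schwarz to the convolution produces $\|\widehat{uv}\|_{L^{\infty}}\le C\|u\|_{L^{2}}\|v\|_{L^{2}}$, which closes the base case.

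For $s>0$, I would insert the Peetre-type inequality $\langle\xi\rangle^{s}\le C_{s}\bigl(\langle\xi_{1}\rangle^{s}+\langle\xi-\xi_{1}\rangle^{s}\bigr)$, valid for every $s\ge 0$, into the convolution representation of $\widehat{uv}(\xi)$. This yields the pointwise Fourier-side bound
$$\langle\xi\rangle^{s}\,|\widehat{uv}(\xi)|\le C_{s}\bigl|\widehat{(\langle D\rangle^{s}u)\,v}(\xi)\bigr|+C_{s}\bigl|\widehat{u\,(\langle D\rangle^{s}v)}(\xi)\bigr|,$$
where $\langle D\rangle=(1-\partial_{x}^{2})^{1/2}$. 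Multiplying by $\omega(\xi)$, taking $L^{2}$ norms in $\xi$, invoking the $s=0$ case on each of the two summands, and using $\|w\|_{L^{2}}\le\|w\|_{H^{s}}$ on the factor not carrying the fractional derivative, delivers the bound by a constant $C_{s}$ times $\|u\|_{H^{s}}\|v\|_{H^{s}}$.

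The computation itself is routine; the only conceptual step, mild as it is, is recognizing that square integrability of the multiplier $\omega$, rather than any cancellation between the factors, is what powers the estimate. Without $\omega\in L^{2}$ the argument would collapse at $s=0$, and no amount of Sobolev regularity on $u$ and $v$ could rescue it.
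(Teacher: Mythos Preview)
Your argument is correct and is in fact the standard proof of this estimate. The paper itself does not prove the lemma at all; it merely records it as a known result from Bona--Tzvetkov \cite{BT}, so there is no in-paper proof to compare against.

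One small technical point: the pointwise Fourier-side inequality you write,
$$\langle\xi\rangle^{s}\,|\widehat{uv}(\xi)|\le C_{s}\bigl|\widehat{(\langle D\rangle^{s}u)\,v}(\xi)\bigr|+C_{s}\bigl|\widehat{u\,(\langle D\rangle^{s}v)}(\xi)\bigr|,$$
is not literally true as stated, because on the right the absolute values sit \emph{outside} the convolution integrals, and cancellation inside could in principle make those terms smaller than the left side. The correct version replaces the right-hand side by convolutions of the nonnegative functions $\langle\cdot\rangle^{s}|\hat u|$, $|\hat v|$ (and symmetrically), i.e.\ one works with $U,V$ defined by $\hat U=|\hat u|$, $\hat V=|\hat v|$. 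Since the $s=0$ step only uses the $L^{2}$ norms of the Fourier transforms, which are unchanged under $|\cdot|$, the rest of your argument goes through verbatim. This is a routine fix and does not affect the substance of the proof.
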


It is worth noting that there is a counterexample in \cite{BT} showing that the inequality (\ref{bt}) is false if $s<0$.

\begin{corollary}\label{Lema1}
For any $s \ge 0$, there is a constant $C = C_s$ such that the inequality
\begin{equation}\label{bilin-1}
\|\tau(\partial_x) \eta^2\|_{H^s} \le C \| \eta\|_{H^s} ^2
\end{equation}
holds, where the operator $\tau(\partial_x)$  is  defined in (\ref{phi-D}).
\end{corollary}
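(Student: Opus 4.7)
The plan is to dominate the symbol $\tau(\xi)$ pointwise by the symbol $\omega(\xi) = |\xi|/(1+\xi^2)$ of Lemma \ref{BT1} and then apply that lemma directly. The key observation is that $\tau$ has numerator of degree $3$ and denominator $\varphi(\xi) = 1 + \gamma_1\xi^2 + \delta_1\xi^4$ of degree $4$, so $\tau$ decays exactly like $1/|\xi|$ at infinity and vanishes like $|\xi|$ at the origin. Consequently $\tau$ should behave like a constant multiple of $\omega$ on all of $\R$.

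First I would verify the two elementary bounds: the numerator satisfies $|3\xi - 4\gamma\xi^3| \lesssim |\xi|(1 + \xi^2)$, while the denominator admits the lower bound
\begin{equation*}
\varphi(\xi) \,\geq\, \min(1,\delta_1)\,(1 + \xi^4) \,\geq\, \tfrac{1}{2}\min(1,\delta_1)\,(1 + \xi^2)^2,
\end{equation*}
where the last step uses $(\xi^2-1)^2 \geq 0 \Leftrightarrow 1 + \xi^4 \geq 2\xi^2$, together with $\gamma_1 \geq 0$ and $\delta_1 > 0$ from the standing hypothesis. Dividing these two estimates yields the pointwise bound $|\tau(\xi)| \leq C\,\omega(\xi)$ for some constant $C = C(\gamma, \gamma_1, \delta_1)$.

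Next I would combine this with Plancherel. Since $|\widehat{\tau(\partial_x)\eta^2}(\xi)| = |\tau(\xi)|\,|\widehat{\eta^2}(\xi)| \leq C\,\omega(\xi)\,|\widehat{\eta^2}(\xi)|$, and the multiplier $(1+\xi^2)^{s/2}$ is nonnegative, we obtain
\begin{equation*}
\|\tau(\partial_x)\eta^2\|_{H^s} \,\leq\, C\,\|\omega(\partial_x)\eta^2\|_{H^s}.
\end{equation*}
Now Lemma \ref{BT1} applies (with $u = v = \eta$), giving $\|\omega(\partial_x)\eta^2\|_{H^s} \lesssim \|\eta\|_{H^s}^2$ for every $s \geq 0$, which completes the argument.

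There is no real obstacle here; the whole point is that the linear higher-order dispersion built into $\varphi$ is strong enough to tame the derivative loss in the nonlinearity, so that the problematic operator $\tau(\partial_x)$ inherits the smoothing effect already captured by Bona--Tzvetkov's bilinear estimate. The only minor care needed is to check the pointwise domination globally in $\xi$, which is why one splits informally into the regions $|\xi| \leq 1$ (where both $\tau$ and $\omega$ behave like $|\xi|$) and $|\xi| \geq 1$ (where both behave like $1/|\xi|$) — but this is already subsumed by the one-line inequality above.
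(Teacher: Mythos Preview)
Your proof is correct and follows exactly the same approach as the paper: establish the pointwise bound $|\tau(\xi)| \leq C\,\omega(\xi)$ using $\delta_1 > 0$, then invoke Lemma~\ref{BT1}. You simply supply more detail for the pointwise domination than the paper does.
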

\begin{proof}
Since  $\delta_1>0$, it follows  that $\tau(\xi) \leq C \omega(\xi)$ for some constant $C>0$.  The proof of
the estimate (\ref{bilin-1}) thus follows from Lemma \ref{BT1}.
\end{proof}

\begin{proposition}\label{P1}
For $s \ge \frac16$, there is a constant $C = C_s$ such that 
\begin{equation}\label{trilin-1}
\|\psi(\partial_x) \eta^3\|_{H^s} \le C \| \eta\|_{H^s} ^3.
\end{equation}
\end{proposition}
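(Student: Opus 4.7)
The plan is to exploit the three-derivative smoothing built into the operator $\psi(\partial_x)$, which follows from the positivity of $\delta_1$. Indeed, $\varphi(\xi)=1+\gamma_1\xi^2+\delta_1\xi^4\gtrsim 1+\xi^4$, so $|\psi(\xi)|=|\xi|/\varphi(\xi)\lesssim |\xi|/(1+\xi^4)$; the symbol $\psi$ is bounded on $\R$, decays like $|\xi|^{-3}$ at infinity, and consequently the weight $\langle\xi\rangle^{2s}|\psi(\xi)|^2$ is integrable on $\R$ whenever $s<5/2$. I would split the analysis at $s=1/2$.

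For $s>1/2$ the space $H^s(\R)$ is a Banach algebra in one spatial dimension, and so $\|\eta^3\|_{H^s}\lesssim\|\eta\|_{H^s}^3$; combined with the trivial boundedness of $\psi(\partial_x)$ on $H^s$ (the symbol $\psi$ is bounded), this settles the range $s>1/2$ immediately and without reference to the smoothing.

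For $1/6\le s\le 1/2$ the argument is genuinely dimension-sensitive. The one-dimensional Sobolev embedding $H^s(\R)\hookrightarrow L^3(\R)$, critical exactly at $s=1/6$ (where $1/3=1/2-s$), gives $\|\eta\|_{L^3}\lesssim\|\eta\|_{H^s}$, hence $\|\eta^3\|_{L^1}=\|\eta\|_{L^3}^3\lesssim\|\eta\|_{H^s}^3$. Hausdorff--Young then yields the uniform pointwise bound $\|\widehat{\eta^3}\|_{L^\infty}\le\|\eta^3\|_{L^1}\lesssim\|\eta\|_{H^s}^3$. Plancherel's identity reduces the desired estimate to $\|\psi(\partial_x)\eta^3\|_{H^s}^2\le\|\widehat{\eta^3}\|_{L^\infty}^2\int_\R\langle\xi\rangle^{2s}|\psi(\xi)|^2\,d\xi$, and the weight integrability noted above (comfortably valid since $s\le 1/2<5/2$) closes the estimate.

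The only delicate point is matching the Sobolev exponent to the endpoint $s=1/6$: the lower threshold is forced precisely by requiring $\eta^3\in L^1$ through the critical embedding $H^{1/6}\hookrightarrow L^3$, and this is the sharp feature of the hypothesis. The integrability of $\langle\xi\rangle^{2s}|\psi(\xi)|^2$ and the three derivatives of smoothing afforded by $\psi(\partial_x)$ both leave plenty of room; the genuine bottleneck is obtaining $L^3$ control of $\eta$ from its $H^s$ norm.
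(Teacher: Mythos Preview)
Your argument is correct and follows essentially the same route as the paper: for low $s$ both proofs use the critical embedding $H^{1/6}\hookrightarrow L^3$ to bound $\|\widehat{\eta^3}\|_{L^\infty}$ and then exploit the integrability of $\langle\xi\rangle^{2s}|\psi(\xi)|^2$ (equivalently, of $(1+|\xi|)^{-2(3-s)}$) for $s<5/2$. For $s>1/2$ your use of the boundedness of the symbol $\psi$ together with the algebra property of $H^s$ is a slight simplification over the paper, which instead invokes the sharp bilinear estimate $\|\omega(\partial_x)(uv)\|_{H^s}\lesssim\|u\|_{H^s}\|v\|_{H^s}$ with $u=\eta$, $v=\eta^2$; both arguments are elementary and equivalent in effect.
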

\begin{proof}
Consider first when $\frac16 \le s < \frac52$.   In this case, it appears that 
\begin{equation}\label{x2}
\Big|(1+|\xi|)^s \,\psi(\xi)\Big|=\Big|\frac{ (1+|\xi|)^s \xi}{(1 +\gamma_1 \xi^2+\delta_1\xi^4)}\Big|
 \le C \frac{1}{(1+|\xi|)^{3-s}}.
\end{equation}
 The last inequality implies that
\begin{equation}\label{x11}
\begin{split}
\|\psi(\partial_x) \eta^3\|_{H^s} &= \|(1+|\xi|)^s \,\psi(\xi)\widehat{\eta^3}(\xi)\|_{L^2}
\le C\left\|\frac{1}{(1+|\xi|)^{3-s}} \widehat{\eta^3}(\xi)\right\|_{L^2}\\ &\leq C \left\|\frac{1}{(1+|\xi|)^{3-s}}\right\|_{L^2}\| \widehat{\eta^3}\|_{L^{\infty}} \leq C\|\eta\|_{L^3}^3.
\end{split}
\end{equation}
In one dimension, the Sobolev embedding theorem states in part that $H^{\frac16}$ is embedded in $L^3$, so 
\begin{equation}\label{x3}
\|\eta\|_{L^{3}} \le C \|\eta\|_{H^\frac16},
\end{equation}
whence 
$$
\|\psi(\partial_x) \eta^3\|_{H^s} \le  C\| \eta\|_{H^s} ^3
$$
whenever $\frac16 \leq s < \frac52$.  

On the other hand, if $s >1/2$,  the Sobolev space $H^s$ is a Banach algebra.  
Since $|\psi(\xi)| \leq C\omega(\xi)$, Lemma \ref{BT1} implies that
\begin{equation}
\|\psi(\partial_x) (\eta \eta^2)\|_{H^s} \le C\|\eta \|_{H^s} \|\eta^2
\|_{H^s}\le C\|\eta \|_{H^s}^3,
\end{equation}
which completes the proof of  Proposition \ref{P1}.
\end{proof}

\begin{remark}   The reader will appreciate presently that this result is 
only used in case $s > \frac12$, so the full power of the last proposition 
is not needed in our theory.  We thought it interesting that the result 
holds down to $s = \frac16$ and note that the inequality useful at this
level  could be in the setting of internal waves in the deep ocean.   
This  point will be investigated in future research.   

\end{remark}

\begin{lemma}
For $s \ge 1$, the inequality 
\begin{equation}\label{Sharp1}
\|\psi(\partial_x) \eta_x^2\|_{H^s} \le C \| \eta\|_{H^s} ^2
\end{equation}
holds.
\end{lemma}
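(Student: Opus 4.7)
The key observation is that $\psi(\partial_x)$ is strongly smoothing. Since $\delta_1>0$ and $\gamma_1\ge 0$,
$$|\psi(\xi)|=\frac{|\xi|}{1+\gamma_1\xi^2+\delta_1\xi^4}\le\frac{C\,|\xi|}{(1+\xi^2)^2}\le\frac{C}{(1+\xi^2)^{3/2}},$$
so $(1+\xi^2)^{r/2}|\psi(\xi)|\le C(1+\xi^2)^{(r-3)/2}$ for every $r\in\R$, yielding the operator bound $\|\psi(\partial_x) f\|_{H^r}\le C\|f\|_{H^{r-3}}$. Hence the inequality \eqref{Sharp1} is reduced to proving the Sobolev-multiplication estimate $\|\eta_x^2\|_{H^{s-3}}\le C\|\eta\|_{H^s}^2$ for all $s\ge 1$.

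I would establish this multiplication estimate in two overlapping ranges. In the low range $1\le s\le 2$, note that $3-s\ge 1>1/2$, so by one-dimensional Sobolev embedding $H^{3-s}\hookrightarrow L^\infty$. Working by duality, for any test function $\phi$ with $\|\phi\|_{H^{3-s}}=1$,
$$\Big|\int_\R \eta_x^2\,\phi\,dx\Big|\le\|\eta_x\|_{L^2}^2\,\|\phi\|_{L^\infty}\le C\|\eta\|_{H^1}^2\le C\|\eta\|_{H^s}^2,$$
and taking the supremum yields the desired bound. In the complementary range $s>3/2$, Sobolev embedding gives $\|\eta_x\|_{L^\infty}\le C\|\eta_x\|_{H^{s-1}}\le C\|\eta\|_{H^s}$; for $s\le 3$ we have $s-3\le 0$ and
$$\|\eta_x^2\|_{H^{s-3}}\le\|\eta_x^2\|_{L^2}\le\|\eta_x\|_{L^\infty}\|\eta_x\|_{L^2}\le C\|\eta\|_{H^s}^2,$$
while for $s>3$ the algebra property of $H^{s-1}$ (valid since $s-1>1/2$) yields $\|\eta_x^2\|_{H^{s-3}}\le\|\eta_x^2\|_{H^{s-1}}\le C\|\eta_x\|_{H^{s-1}}^2\le C\|\eta\|_{H^s}^2$. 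The two ranges cover all $s\ge 1$.

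The only delicate point is the low-regularity endpoint $s=1$, where $\eta_x$ lies merely in $L^2$ and the pointwise square $\eta_x^2$ is only an $L^1$ function, so at the level of $L^2$ outputs this would fall short by a full derivative. It is precisely the three-derivative smoothing afforded by the $\delta_1\xi^4$ term in the denominator of $\psi$ that rescues the estimate, and this is what justifies the standing hypothesis $\delta_1>0$.
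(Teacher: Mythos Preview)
Your proof is correct, but it takes a different route from the paper's. The paper observes that $\psi(\xi)\le C\,\omega(\xi)/(1+|\xi|)$ where $\omega(\xi)=|\xi|/(1+\xi^2)$, and then invokes the sharp Bona--Tzvetkov bilinear estimate (Lemma~\ref{BT1}) to write
\[
\|\psi(\partial_x)\eta_x^2\|_{H^s}\le C\|\omega(\partial_x)\eta_x^2\|_{H^{s-1}}\le C\|\eta_x\|_{H^{s-1}}^2\le C\|\eta\|_{H^s}^2,
\]
valid in one stroke for all $s\ge 1$ since $s-1\ge 0$. Your argument instead exploits the full three-derivative smoothing of $\psi$ to reduce to a Sobolev product estimate $\|\eta_x^2\|_{H^{s-3}}\le C\|\eta\|_{H^s}^2$, which you then establish by a case split (duality for $1\le s\le 2$, $L^\infty$ embedding for $3/2<s\le 3$, algebra property for $s>3$). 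The paper's approach is more compact and uniform because the bilinear lemma has already absorbed the delicate low-regularity product structure; your approach is more elementary and self-contained, avoiding any appeal to that external result, at the cost of handling several ranges separately. Both are perfectly valid, and your closing remark about the role of $\delta_1>0$ at the endpoint $s=1$ is exactly the right intuition.
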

\begin{proof}
Observe that
$$
\psi(\xi) \leq C\omega(\xi) \frac1{1+ |\xi|}.
$$
The inequality (\ref{bt})  then allows the conclusion 
\begin{equation}\label{x6}
\|\psi(\partial_x) \eta_x^2\|_{H^s}  \leq C\| \omega(\partial_x) \eta_x^2\|_{H^{s-1}} \le  C\|  \eta_x\|_{H^{s-1}}\|  \eta_x\|_{H^{s-1}}
\le  C\|  \eta\|_{H^{s}}^2,
\end{equation}
since  $s-1 \ge 0$.
\end{proof}

The preceding ingredients are assembled to provide a proof of the local well-posedness theorem.
\begin{proof}[Proof of Theorem \ref{mainTh1}]
 Define a mapping
\begin{equation}\label{eq3.42}
\Psi\eta(x,t) = S(t)\eta_0 -i\int_0^tS(t-t')\Big(\tau(D_x)\eta^2 - \frac14 \psi(\partial_x)\eta^3
 -\frac7{48} \psi(\partial_x)\eta_x^2\Big)(x, t') dt'.
\end{equation}
The immediate goal is to show that this mapping is a contraction on a closed ball $\mathcal{B}_r$ with radius $r > 0$ and center at the origin in $C([0,T];H^s)$.

As remarked earlier,  $S(t)$ is a unitary group in $H^s(\R)$  (see (\ref{eq1.11})), and therefore
\begin{equation}\label{eq3.43}
\|\Psi\eta\|_{H^s} \leq \|\eta_0\|_{H^s} +CT\Big[\big{\|}\tau(\partial_x)\eta^2 - \frac18 \psi(\partial_x)\eta^3
 -\frac7{48}\psi(\partial_x)\eta_x^2\big{\|}_{C([0,T];H^s)}\Big].
\end{equation}
The inequalities (\ref{bilin-1}), (\ref{trilin-1}) and (\ref{Sharp1}) lead immediately to 
\begin{equation}\label{eq3.44}
\|\Psi\eta\|_{H^s} \leq \|\eta_0\|_{H^s} +CT\Big[\big{\|}\eta\big{\|}_{C([0,T];H^s)}^2 + \big{\|}\eta\big{\|}_{C([0,T];H^s)}^3 +\big{\|}\eta\big{\|}_{C([0,T];H^s)}^2\Big].
\end{equation}
If, in fact,  $\eta\in \mathcal{B}_r$, then (\ref{eq3.44}) yields
\begin{equation}\label{eq3.45}
\|\Psi\eta\|_{H^s} \leq \|\eta_0\|_{H^s} +CT\big[2r +r^2 \big]r.
\end{equation}

If we choose $r= 2\|\eta_0\|_{H^s}$ and $T= \frac1{2Cr(2 + r) }$, then  $\|\Psi\eta\|_{H^s} \leq r$, showing that $\Psi$ maps
 the closed ball $\mathcal{B}_r$ in $C([0,T];H^s)$ onto itself.   With the same choice of $r$ and $T$ and the same sort of 
estimates, one discovers that $\Psi$ is a contraction on $\mathcal{B}_r$ with contraction constant equal to $\frac12$ as it happens. The rest of
the proof is standard.
\end{proof}

\begin{remark}\label{rm2.1}
The following points follow immediately from the proof of the Theorem \ref{mainTh1}:
\begin{enumerate}
\item The maximal existence time $T=T_s$ of the solution satisfies
\begin{equation}\label{r2.45}
T\geq \bar{T} = \frac1{8C_s\|\eta_0\|_{H^s}(1+\|\eta_0\|_{H^s})},
\end{equation}
where the constant $C_s$ depends only on $s$.
\item The solution cannot grow too much, which is to say,
\begin{equation}\label{r2.46}
\|\eta(\cdot,t)\|_{H^s} \leq r =  2\|\eta_0\|_{H^s}
\end{equation}
for all $t \in [0,\bar{T}]$ where $\bar{T}$ is as above in (\ref{r2.45}).
\end{enumerate}
\end{remark}

\subsection{Global well-posedness}

In this section, {\em a priori} deduced bounds are obtained with an eye toward extending the local well-posedness just established.  The present theory 
  countenances the spaces $H^s(\R)$, $s \geq\frac32$.   However, we begin with  a global well-posedness result in $H^s(\R)$ for $s\geq 2$.

\subsubsection{Global well-posedness in $H^2$}
The aim here is to derive an {\em a priori} estimate in $H^2(\R)$, subject to certain restrictions on the parameters that appear in (\ref{eq1.6}). Multiplying the equation (\ref{eq1.6}) by $\eta$, integrating over the spatial domain $\R$ and integrating by parts yields
\begin{equation}
\frac{1}{2} \frac{d}{dt}  \int_{\mathbb{R}} \left(\eta^2 + \gamma_1\eta_x^2+\delta_1\eta_{xx}^2\, \right)dx+ \gamma \int_{\mathbb{R}}(\eta^2)_{xxx}\, \eta\,dx-\frac7{48} \int_{\mathbb{R}} (\eta_x^2)_x\, \eta\,dx=0.
\end{equation}
Further integrations by parts gives
\begin{equation}\label{apriori1}
\frac12 \frac{d}{dt} \int_{\mathbb{R}} \left(\eta^2 + \gamma_1\eta_x^2+\delta_1\eta_{xx}^2\, \right)dx= \left(\gamma -\frac7{48}\right)\int_{\mathbb{R}}\eta_x^3\,dx.
\end{equation}
Of course, these calculations involve derivatives of higher order than are guaranteed to exist by assuming the initial data lies only in $H^2$.  Moreover the term on 
the right-hand side of (\ref{apriori1}) does not converge if the function $\eta$ only lies in $H^2$.  However,  one can make the calculations using smoother solutions and then pass to the limit of rougher data making use of the continuous dependence result.  The idea is standard 
and we pass over the details (cf. \cite{BK}).  

From (\ref{apriori1}) it is clear that an {\em a priori} estimate obtains when $\gamma =\frac7{48}$.  That such a condition 
can be imposed while respecting the other mathematical limitations $\gamma_1>0$ and  $\delta_1 > 0$ will be discussed in 
Section \ref{sec5}.   For the time being,
we presume that $\theta, \lambda, \mu, \lambda_1$, $\mu_1$ 
and $\rho$  have been chosen so that $\gamma = \frac7{48}$
 and $\gamma_1, \delta_1 > 0$ still holds.  
In this case, the equation (\ref{eq1.6}) becomes
\begin{equation}\label{eta}
\eta_t+\eta_x-\gamma_1 \eta_{xxt}+\gamma_2\eta_{xxx}+\delta_1\eta_{xxxxt}+\delta_2\eta_{xxxxx}+\frac34(\eta^2)_x+\gamma\big(\eta^2\big)_{xxx}-\gamma\big(\eta_x^2\big)_x -\frac18\big(\eta^3\big)_x=0.
\end{equation} 
In this form, it has the conserved quantity
\begin{equation}\label{apriori4}
 E(\eta(\cdot,t)):=  \frac12\int_{\mathbb{R}} \eta^2 + \gamma_1 (\eta_x)^2+\delta_1(\eta_{xx})^2\, dx= E(\eta_0).
\end{equation}

\begin{remark}  \label{hamiltonian}
In fact, with the restriction $\gamma =\frac7{48}$, the equation is Hamiltonian, for there is a second conserved quantity, namely 
\begin{equation}  \label{invariant2}
\Theta(\eta) = \frac12\int_\R \left( -\eta^2 -\frac12\eta^3 + \frac1{16}\eta^4 + \frac{7}{24} \eta \eta_x^2+\gamma_2\eta_x^2 -  \delta_2 \eta_{xx}^2\right) dx.
\end{equation}
   The system itself may be written in the 
Hamiltonian format
\begin{equation}\label{Hamiltonian}
  \frac{\partial}{\partial t} \nabla E(\eta) \, = \, \frac{\partial}{\partial x} \nabla \Theta(\eta)
\end{equation}  
where $\nabla E$ is the Euler derivative of $E$ and  similarly $\nabla \Theta$ the Euler derivative of $\Theta$.  
\end{remark}

The conservation law (\ref{apriori4}), which is essentially the $H^2$--norm, immediately points to the following global well-posedness result.

\begin{theorem}\label{mainTh2}
 Let $s\geq 2$ and suppose $\gamma_1, \delta_1 > 0$ and $\gamma=\frac7{48}$.   Then  the IVP for equation (\ref{eq1.6})  is globally well-posed in $H^s(\R)$.
\end{theorem}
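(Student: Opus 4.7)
The strategy is to promote the local well-posedness of Theorem \ref{mainTh1} to a global one by producing an \emph{a priori} bound on $\|\eta(\cdot,t)\|_{H^s}$ and then exploiting the fact, recorded in Remark \ref{rm2.1}, that the local existence time depends only on the size of the data in $H^s$. Under the hypothesis $\gamma=7/48$, equation (\ref{eta}) conserves the functional $E(\eta)$ of (\ref{apriori4}); since $\gamma_1,\delta_1>0$, this $E$ is equivalent to $\tfrac12\|\eta\|_{H^2}^2$, so conservation immediately yields
\begin{equation*}
\|\eta(\cdot,t)\|_{H^2}\;\le\;C_{\gamma_1,\delta_1}\,\|\eta_0\|_{H^2}
\end{equation*}
on the maximal interval of existence.

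For $s=2$ this bound is the whole story: Remark \ref{rm2.1} gives a local existence time $\bar T$ depending only on $\|\eta_0\|_{H^2}$, so the local theorem may be iterated from $\eta(\cdot,\bar T),\,\eta(\cdot,2\bar T),\dots$ with a uniform step, concatenating a continuous $H^2$-valued solution curve on every interval $[0,T]$, $T>0$. Uniqueness and continuous dependence transfer across the concatenations by standard arguments.

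For $s>2$ one needs, in addition, to control $\|\eta(\cdot,t)\|_{H^s}$. My plan is a higher-order energy estimate: apply $J^{s-2}:=(1-\partial_x^2)^{(s-2)/2}$ to (\ref{eta}) and pair the result with $J^{s-2}\eta$ in $L^2(\R)$. The antisymmetric linear dispersive terms $\eta_x,\eta_{xxt},\eta_{xxx},\eta_{xxxxt},\eta_{xxxxx}$ reorganize, after integration by parts, into the time derivative of the quadratic form
\begin{equation*}
E_{s-2}(\eta)\;=\;\tfrac12\!\int_\R\!\bigl((J^{s-2}\eta)^2+\gamma_1(J^{s-2}\eta_x)^2+\delta_1(J^{s-2}\eta_{xx})^2\bigr)\,dx,
\end{equation*}
which is equivalent to $\|\eta\|_{H^s}^2$ because $1+\gamma_1\xi^2+\delta_1\xi^4\sim\langle\xi\rangle^4$. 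The nonlinear contributions are estimated by Kato--Ponce commutator inequalities and the fractional Leibniz rule, invoking the Sobolev embedding $H^2\hookrightarrow W^{1,\infty}$ together with the $H^2$-bound already secured, which between them furnish uniform-in-time control of $\|\eta\|_{L^\infty}+\|\eta_x\|_{L^\infty}$. The outcome is a differential inequality
\begin{equation*}
\frac{d}{dt}E_{s-2}(\eta(\cdot,t))\;\le\;C(\|\eta_0\|_{H^2})\,E_{s-2}(\eta(\cdot,t)),
\end{equation*}
and Gronwall's lemma rules out any finite-time growth of $\|\eta\|_{H^s}$; iterating the local theorem then extends the solution to $[0,T]$ for any $T>0$.

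The principal technical obstacle lies with the top-order nonlinear term $\gamma(\eta^2)_{xxx}$: applying $J^{s-2}$ and pairing with $J^{s-2}\eta$ produces expressions in which up to three derivatives fall on a single factor, and these must be rearranged through integration by parts and commutator analysis so that at most $s$ derivatives end up on each copy of $\eta$. Every such term can ultimately be dominated by $\bigl(\|\eta\|_{L^\infty}+\|\eta_x\|_{L^\infty}\bigr)\|\eta\|_{H^s}^2$, using only information already controlled by the conservation law. The hypothesis $\gamma=7/48$ plays no direct role at higher orders; it is used once, through the conservation of $E$, to anchor the estimate at the $H^2$ level, from which the rest is bootstrapped.
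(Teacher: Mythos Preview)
Your argument is correct and, for $s=2$, identical to the paper's: conservation of $E$ gives a uniform $H^2$ bound and the local theorem iterates with a fixed step.

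For $s>2$ you and the paper diverge. The paper proceeds by induction on integer $k$: differentiate (\ref{eta}) $k-2$ times, multiply by $\partial_x^{k-2}\eta$, integrate by parts, and use the standing $H^{k-1}$ bound together with the elementary Sobolev inequalities (\ref{immers11}) to close a linear-in-$t$ estimate on $\|\eta\|_{H^k}$. Fractional exponents are then recovered by nonlinear interpolation \cite{BSc,BCW} between consecutive integers. Your route is to apply the Bessel potential $J^{s-2}$ directly and close the energy inequality with Kato--Ponce commutator estimates, treating all $s>2$ at once. Both strategies bootstrap from the $H^2$ conservation law and ultimately reduce the nonlinear terms to quantities dominated by $(\|\eta\|_{L^\infty}+\|\eta_x\|_{L^\infty})\|\eta\|_{H^s}^2$; your version is more uniform in $s$ and avoids the interpolation machinery, while the paper's integer calculations are more elementary and make the integration-by-parts structure explicit (at the cost of the separate interpolation step). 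One small point worth stating in your write-up: the formal $J^{s-2}$ computation should be justified by working first with smoother data and passing to the limit via continuous dependence, as the paper notes for the $H^2$ identity.
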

\begin{proof}
Following a standard argument, the global well-posedness in $H^2(\R)$ is a consequence of the local theory and the {\it a priori} bound implied by the conserved quantity (\ref{apriori4}). To prove global well-posedness in $H^k$,  where $k \ge 3$ is an integer, proceed by induction on $k$.  

Assume that $\eta_0$ lies in $H^3$.  The local well-posedness theory then delivers a solution in $C([0,T];H^3)$ for some $T > 0$.  If {\it a priori} bounds on the $H^3$--norm of $\eta$ which are finite on finite time intervals holds, 
then the local theory can be iterated and a global solution results.  

Differentiate  equation (\ref{eta}) with respect to the spatial variable, multiply the resulting equation by $\eta_x$ and integrate over $\R$.  After integrations by parts in the spatial variable, there obtains
\begin{equation}\label{eta-2}
\begin{split}
\frac12 \frac{d}{dt}  \int_{\mathbb{R}} \left(\eta_x^2 + \gamma_1 \eta_{xx}^2+\delta_1\eta_{xxx}^2 \right)\,dx +\frac34 \int_{\mathbb{R}}\eta_{x}^3 \, dx \\ -3 \gamma \int_{\mathbb{R}}\eta_{xx}^2 \eta_{x}\,dx-\frac38 \int_{\mathbb{R}} \,  \eta_x^3\,\eta\, dx=0.
\end{split}
\end{equation}
Standard Sobolev embedding results show that  for any time $t$ for which the solution exists,
\begin{equation}\label{immers11}
\begin{split}
\|\eta\|_{L_x^2}^2 \le 2 E_0, \quad \|\eta_x\|_{L_x^2}^2 \le \frac2{\gamma_1}E_0, \quad \|\eta_{xx}\|_{L_x^2}^2 \le \frac{2}{\delta_1}E_0, \\
 \|\eta\|_{L_x^{\infty}}^2 \le \frac4{\sqrt{\gamma_1}}\,E_0, \quad \|\eta_x\|_{L_x^{\infty}}^2 \le \frac{4}{\sqrt{\delta_1\gamma_1}}\,\,E_0,
\end{split}
\end{equation}
where $E_0 = E(\eta_0)$.
After integrating (\ref{eta-2})  with respect to time over the interval $[0,t]$, making elementary 
estimates of all the terms not involving a third derivative  and using (\ref{immers11}) systematically, there obtains the inequality 
\begin{eqnarray}
\delta_1\int_{\mathbb{R}}  \eta_{xxx}^2\, dx& \leq & \,\int_{\mathbb{R}} \left( (\eta_{0x})^2+\gamma_1(\eta_{0xx})^2 + \delta_1(\eta_{0xxx})^2\right)\,dx \nonumber\\ 
 &\,& +\,C \int_0^t \|\eta_x\|_{L_x^{\infty}} \left( \|\eta_x\|_{L_x^2}^2+ \|\eta_{xx}\|_{L_x^2}^2+\|\eta_x\|_{L_x^2}^2\,\|\eta\|_{L_x^{\infty}}\right) dx\nonumber \\
& \le &\,\delta_1\int_{\mathbb{R}} (\eta_{0xxx})^2\, dx+ CE_0 + C E_0^{3/2}\left( 1+ E_0^{1/2}\right) t,\nonumber
\end{eqnarray}
from which the desired $H^3$--bound follows.

Assuming there are in hand $H^k$ bounds, an entirely similar energy-type calculation reveals that the solution $\eta$ has  $H^{k+1}$--bounds 
as soon as the initial data $\eta_0$ lies in $H^{k+1}$.   We pass over the details.

To obtain global well-posedness in the fractional-order Sobolev spaces $H^s$, $s \ge 2$ not an integer, a straightforward application of nonlinear interpolation theory (see \cite{BSc}, \cite{BCW}) may be applied, thereby completing the proof of the theorem.
\end{proof}

\subsubsection{Global well-posedness in $H^s$, $s\geq \frac32$}

The object of this subsection is to prove the second main result, Theorem \ref{mainTh3}.  To establish well-posedness below the level where 
{\it a priori} bounds obtain, a Fourier splitting technique will be employed wherein the data $\eta_0$ is decomposed into a small, rough 
part and a smooth part.  As already mentioned, such decompositions are commonplace in various 
contexts in the theory of partial differential equations. 

Let there be given initial data $\eta_0 \in H^s$ where $1 \leq s < 2$ and a $T>0$.    
As advertised, the data $\eta_0$ is decomposed into a small part and a smooth part, {\it viz.}
\begin{equation} \label{decompose}
\eta_0 = w_0 \, + \, v_0 \qquad {\rm where} \qquad w_0 \in H^\infty \,\,\, {\rm and} \,\,\, v_0 \in {H^s}
\end{equation}
is small.  Such a decomposition can be effected in many ways.  One that is especially helpful 
in what follows is the one-parameter family $\{w_0^\epsilon\}_{\epsilon > 0}$ defined by way of their 
Fourier transforms to be 
\begin{equation} \label{regularize}
\widehat{w_0^\epsilon} = \zeta(\epsilon \xi)\widehat{\eta_0}(\xi)
\end{equation}
where $\zeta$ is an even, $C^\infty$--function defined on $\R, 0 \, \leq \zeta \leq 1, \, \zeta(0) = 1$ 
and such that $1-\zeta(\xi)$ has a zero of infinite order at $\xi = 0$ while $\zeta$ decays exponentially 
to 0 as $|\xi| \to \infty$. 
(For example, $\zeta$ could be a cut-off function which is identically equal  to 1 on the interval $[-1,1]$ 
and has support in $[-2,2]$.)
 It  follows by a straightforward computation in the Fourier transformed variables
 that if $\eta_0 \in H^s$, then for $r \geq 0$, 
\begin{eqnarray} \label{approximation}
\|w_0^\epsilon\|_{H^{s+r}} \,=\,  O\left(\epsilon^{-r} \right) \quad {\rm and}\qquad
\|\eta_0 - w_0^\epsilon\|_{H^{s-r}}  \, =  \,o\left(\epsilon^r\right)
\end{eqnarray}
as $\epsilon \downarrow 0$ (see, for example, Lemma 5 in \cite{BS}).  
Define $v_0 = v^\epsilon_0 =  \eta_0 - w_0^{\epsilon}$.  For the moment, the 
 dependence of both $v_0$ and $w_0$ upon $\epsilon$ will be suppressed.  
The values of $\epsilon$ will be appropriately limited  presently.  

By choosing $\epsilon$ small enough so that  $\|v_0\|_{H^s} \leq 1$ and $\|v_0\|_{H^s} \leq \frac1{12C_sT}$,   
 the local well-posedness theory adduced in Theorem \ref{mainTh1}
assures us that if we pose $v_0$ as initial data 
for our evolution equation (\ref{eta}), then the solution $v$ emanating from it will  lie in $C([0,T];H^s)$ and it will 
not be larger than  $2\|v_0\|_{H^s}$ over the entire time interval $[0,T]$  (see Remark \ref{rm2.1}).  We can also insure that  
\begin{equation} \label{H1small}
\|v(\cdot,t)\|_{H^1} \leq 2\|v_0\|_{H^1}\quad  {\rm for \, \, all} \,\,  t \in [0,T], 
\end{equation}
simply by imposing the further restriction $\|v_0\|_{H^1} \leq \frac1{12C_1T}$.  This follows since the integral operator 
$\Psi$ in (\ref{eq3.42}) will simultaneously satisfy (\ref{r2.45}) and (\ref{r2.46}) for both 
the Sobolev indices $s$ and $1$.  
The solutions, which are the fixed points of $\Psi$ in the two spaces, must be the same by uniqueness in the larger space.

Once $v$ is fixed and known to exist on the entire time interval $[0,T]$, the smooth part $w_0$ of the initial data is evolved according to the variable coefficient
IVP
\begin{equation}\label{w-1}
\begin{cases}
w_t+w_x -\gamma_1 w_{xxt}+\gamma_2 w_{xxx}+\delta_1 w_{xxxxt}+\delta_2 w_{xxxxx}+G(v,w)=0, \\
 w(x,0)=w_0(x),
\end{cases}
\end{equation}
where  
 \begin{eqnarray}\label{3.4-1}
G(v,w)&\!\!\!\!\!:=\!\!\!\!& \frac32(v w)_x +\frac34(w^2)_x +2 \gamma(vw)_{xxx}+\gamma(w^2)_{xxx}\\
&\,& -2\gamma(v_xw_x)_{x}-\gamma(w_x^2)_{x}-\frac38(v^2w)_{x}-\frac38(vw^2)_{x}-\frac18(w^3)_{x}.\nonumber
\end{eqnarray}

If a solution $w$ exists in $C([0,T];H^s)$, then $v + w$ provides a solution on the time interval $[0,T]$ of the original 
problem for the equation (\ref{eta}) with initial value $\eta_0$.  As $T$ was arbitrary, global existence is thereby concluded.
Well-posedness then follows from the local theory.  That is, the continuous dependence of the
solution on the initial data and the uniqueness of solutions within the function class $C([0,T];H^s)$ 
derive from the previously elucidated local well-posedness results. Thus, Theorem \ref{mainTh3} will be established as soon as 
(\ref{w-1}) is shown to have a solution in $C([0,T];H^s)$.

\begin{proof}[Proof of Theorem \ref{mainTh3}]
As already discussed, the variable coefficient $v$ appearing in the nonlinearity (\ref{3.4-1}) lies in $C([0,T];H^s) \subset  C([0,T];H^1)$.  As a first step, it is important to show that the IVP (\ref{w-1}) for $w$ is locally well-posed in $H^2$ and not just in $H^s$.  
To this end, write the IVP (\ref{w-1}) in the equivalent, integral equation form   
\begin{equation}\label{w-integral}
\begin{cases}
w(x,t) = S(t)w_0 -i\int_0^tS(t-t')\Big(\tau(\partial_x)w^2 + 2\tau(\partial_x)wv - \frac18 \psi(\partial_x)w^3 \\ 
\hspace{1.75cm}
- \frac38 \psi(\partial_x)w^2v - \frac38 \psi(\partial_x)wv^2 
 - \gamma\psi(\partial_x)w_x^2  - 2\gamma\psi(\partial_x)w_xv_x  \Big)(x, t') dt' \\
\hspace{1.35cm} = \,\, \Phi(w)(x,t),
\end{cases}
\end{equation}
where the Fourier multiplier operators $\psi(\partial_x)$ and $\tau(\partial_x)$ are  as defined already in (\ref{phi-D}) and the unitary 
family $S(t)$ is the solution  group for the linear equation (\ref{eq1.10}).  
 
This integral equation is studied in $C([0,T];H^2)$ when the variable coefficient $v$ lies in $C([0,T];H^s)$.  As $w_0$ lies in $H^\infty$, it is clear that $S(t)w_0$ lies in $C(\R;H^2)$.   Just as in the earlier analysis of the integral equation (\ref{eq1.12}), the argument proceeds by showing that the 
mapping $ w \mapsto \Phi(w)$ defined by the right-hand side  of (\ref{w-integral}) is a contraction on a ball $\mathcal{B}_r$ of radius $r$ about $0$ in the space 
$C([0,T_0];H^2)$ for $r$ large enough and $T_0$ small enough.  This will establish the local well-posedness needed for the next step in
the analysis.  

The summands in the integral equation that only feature $w$ may be handled just as before and suitable estimates are forthcoming since 
we are working in $H^2$ (see the proof of Theorem \ref{mainTh1}).  The following lemma provides the extra information needed to complete the argument in favor of $\Phi$ being 
a contraction mapping on $\mathcal{B}_r \subset C([0,T_0];H^2)$ for suitable $T_0$ and $r$.  

\begin{lemma}  Suppose $1 \leq s < 2$.  Then for $f \in H^s$ and $g \in H^2$, there are constants $C$ depending only on $s$ such that
\begin{equation} \label{smoothing}
\begin{split}
\|\tau(\partial_x)fg\|_{H^2} \leq C \| f \|_{H^s} \|g  \|_{H^2}, \qquad  \|\psi(\partial_x)f^2g  \|_{H^2} \leq C \|f  \|^2_{H^s} \| g \|_{H^2}, \\
 \|\psi(\partial_x)fg^2  \|_{H^2} \leq C \|f  \|_{H^s} \| g \|^2_{H^2}, \qquad \|\psi(\partial_x)f_xg_x  \|_{H^2} \leq C \|f  \|_{H^s} \| g \|_{H^2}.
 \end{split}
\end{equation}
\end{lemma}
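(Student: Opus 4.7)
The plan is to exploit the fact that the multipliers $\tau(\partial_x)$ and $\psi(\partial_x)$ are genuinely smoothing operators because of the $\delta_1>0$ hypothesis, and then combine the resulting Plancherel-type bounds with elementary 1D product estimates. More precisely, since $\varphi(\xi)=1+\gamma_1\xi^2+\delta_1\xi^4\gtrsim \langle\xi\rangle^4$ (with the implicit constant depending only on $\gamma_1\geq 0$ and $\delta_1>0$), one immediately gets the symbol bounds
$$|\tau(\xi)|\lesssim \frac{1}{\langle\xi\rangle},\qquad |\psi(\xi)|\lesssim \frac{1}{\langle\xi\rangle^{3}}.$$
Multiplying by $\langle\xi\rangle^2$ and applying Plancherel yields the key operator-level inequalities
$$\|\tau(\partial_x)h\|_{H^2}\lesssim \|h\|_{H^1}\qquad\text{and}\qquad \|\psi(\partial_x)h\|_{H^2}\lesssim \|h\|_{H^{-1}}\lesssim \|h\|_{L^2}.$$
Thus I reduce each of the four claims to a product estimate for the nonlinear input.

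For the $\tau$-estimate, I will apply the first operator bound with $h=fg$, reducing matters to $\|fg\|_{H^1}\lesssim \|f\|_{H^s}\|g\|_{H^2}$. This is a standard Moser-type estimate: since $s\geq 1>\tfrac12$, the 1D embedding $H^s\hookrightarrow L^\infty$ gives
$$\|fg\|_{H^1}\lesssim \|f\|_{L^\infty}\|g\|_{H^1}+\|g\|_{L^\infty}\|f\|_{H^1}\lesssim \|f\|_{H^s}\|g\|_{H^2}.$$

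For the three $\psi$-estimates, I just need $L^2$ bounds on the nonlinearities, and Hölder plus $H^s\hookrightarrow L^\infty$ and $H^1\hookrightarrow L^\infty$ do the job. Explicitly, $\|\psi(\partial_x)f^2g\|_{H^2}\lesssim \|f^2g\|_{L^2}\leq \|f\|_{L^\infty}^2\|g\|_{L^2}\lesssim \|f\|_{H^s}^2\|g\|_{H^2}$; similarly $\|\psi(\partial_x)fg^2\|_{H^2}\lesssim \|f\|_{L^\infty}\|g\|_{L^\infty}\|g\|_{L^2}\lesssim \|f\|_{H^s}\|g\|_{H^2}^2$; and finally $\|\psi(\partial_x)f_xg_x\|_{H^2}\lesssim \|f_x g_x\|_{L^2}\leq \|f_x\|_{L^2}\|g_x\|_{L^\infty}\lesssim \|f\|_{H^1}\|g\|_{H^2}\lesssim \|f\|_{H^s}\|g\|_{H^2}$, the last $L^\infty$-bound using $g_x\in H^1\hookrightarrow L^\infty$.

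There is no real obstacle here: the proof is essentially bookkeeping once one records the correct order of smoothing for each symbol. The single conceptually important point is that $\psi$ gains \emph{three} derivatives (not just the one gained by the BBM-type multiplier $\omega$ from Lemma \ref{BT1}), which is exactly what lets the cubic and mixed-derivative terms be controlled by pairing the $L^\infty$-heavy function $f\in H^s$ (with only $s\geq 1$ available) against $g\in H^2$ without ever having to demand additional regularity from $f$.
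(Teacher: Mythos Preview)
Your proof is correct and follows essentially the same route as the paper: both arguments record that $\tau(\partial_x)$ gains one derivative and $\psi(\partial_x)$ gains three, then reduce each inequality to an elementary product estimate. The only cosmetic differences are that the paper phrases the $H^1$ product bounds via the algebra property of $H^1$ (e.g.\ $\|fg\|_{H^1}\le C\|f\|_{H^1}\|g\|_{H^1}$) rather than your Moser/Leibniz form, and for the cubic $\psi$-terms the paper passes through $H^{-1}\hookleftarrow H^1$ rather than your $H^{-1}\hookleftarrow L^2$; the final $f_xg_x$ estimate is handled identically.
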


\begin{proof}  As $\tau(\partial_x)$ is a bounded map from $H^r$ to $H^{r+1}$, it follows that
$$
\|\tau(\partial_x)fg\|_{H^2} \leq C\|fg \|_{H^1}   \leq  C\|f \|_{H^1} \|g \|_{H^1} \leq C\|f \|_{H^s} \|g \|_{H^2.} 
$$
The operator $\psi(\partial_x)$ maps $H^r$ to $H^{r+3}$.  Consequently, we have
\begin{eqnarray}
\|\psi(\partial_x) f^2g \|_{H^2} \leq C\|f ^2g\|_{H^1}  \leq C\|f \|^2_{H^1} \|g \|_{H^1}  \leq C\|f \|^2_{H^s}\|g \|_{H^2}, \nonumber\\
\|\psi(\partial_x) fg^2 \|_{H^2} \leq C\|f g^2\|_{H^1}  \leq C\|f \|_{H^1} \|g \|^2_{H^1}  \leq C\|f \|_{H^s}\|g \|^2_{H^2},\nonumber \\
 \|\psi(\partial_x)f_xg_x \|_{H^2} \leq C\|f_x g_x\|_{L^2}  \leq C\|f_x \|_{L^2} \|g_x \|_{L^\infty}  \leq C\|f \|_{H^s}\|g \|_{H^2},\nonumber
\end{eqnarray}
and the results are established.  
\end{proof}

It is straightforward to use the smoothing estimates (\ref{smoothing}) to show that the mapping $\Phi$ 
 is a contraction on a suitably chosen ball about the origin in $C([0,T_0];H^2)$ 
for $T_0$ small enough, which is the content of the following proposition.  

\begin{proposition}
The IVP (\ref{w-1}) is locally well-posed in $H^2$.  
\end{proposition}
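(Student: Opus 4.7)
The plan is to run a contraction mapping argument for the integral operator $\Phi$ defined in (\ref{w-integral}) on a closed ball $\mathcal{B}_R$ of radius $R$ about the origin in $C([0,T_0];H^2)$, for $R$ suitably large and $T_0>0$ suitably small. The given variable coefficient $v$ lies in $C([0,T];H^s)$ with $1\le s<2$, and we denote $M:=\|v\|_{C([0,T];H^s)}$, which is a fixed finite number. Since $w_0\in H^\infty\subset H^2$ and the group $S(t)$ is unitary on $H^2$, the free term contributes $\|S(t)w_0\|_{H^2}=\|w_0\|_{H^2}$.

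First I would split the nonlinearity in (\ref{w-integral}) into its pure-$w$ part and its $w$--$v$ coupling part and treat them separately. For the pure-$w$ summands $\tau(\partial_x)w^2$, $\psi(\partial_x)w^3$ and $\psi(\partial_x)w_x^2$, I would reuse verbatim the bilinear/trilinear bounds of Corollary~\ref{Lema1}, Proposition~\ref{P1}, and inequality (\ref{Sharp1}) at level $s=2$, yielding
\begin{equation*}
\|\tau(\partial_x)w^2\|_{H^2}\le C\|w\|_{H^2}^2,\quad \|\psi(\partial_x)w^3\|_{H^2}\le C\|w\|_{H^2}^3,\quad \|\psi(\partial_x)w_x^2\|_{H^2}\le C\|w\|_{H^2}^2.
\end{equation*}
For the mixed summands $\tau(\partial_x)(wv)$, $\psi(\partial_x)(w^2v)$, $\psi(\partial_x)(wv^2)$ and $\psi(\partial_x)(w_xv_x)$, I would invoke the four smoothing estimates (\ref{smoothing}) just proved (with $f=v$ and $g=w$), giving bounds of the shape $C\,M\|w\|_{H^2}$, $C\,M\|w\|_{H^2}^2$, $C\,M^2\|w\|_{H^2}$, $C\,M\|w\|_{H^2}$ respectively.

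Assembling these in (\ref{w-integral}) yields, for $w\in\mathcal{B}_R$,
\begin{equation*}
\|\Phi(w)\|_{C([0,T_0];H^2)}\le \|w_0\|_{H^2}+C\,T_0\bigl(P_3(R)+Q_2(R)\,M+M^2 R\bigr),
\end{equation*}
where $P_3$ is a cubic and $Q_2$ a quadratic polynomial with non-negative coefficients. Choosing $R:=2\|w_0\|_{H^2}$ and then $T_0$ small enough, depending only on $R$ and $M$, forces the right-hand side to be at most $R$, so $\Phi(\mathcal{B}_R)\subset\mathcal{B}_R$. A parallel computation for $\Phi(w_1)-\Phi(w_2)$, using the multilinearity of all the nonlinear terms together with (\ref{smoothing}) and the pure-$w$ estimates applied to telescoped differences, gives a Lipschitz bound $\|\Phi(w_1)-\Phi(w_2)\|_{C([0,T_0];H^2)}\le \tfrac12\|w_1-w_2\|_{C([0,T_0];H^2)}$ after a further shrinking of $T_0$. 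Banach's fixed-point theorem then produces a unique $w\in\mathcal{B}_R$ with $\Phi(w)=w$, which is the sought local $H^2$ solution of (\ref{w-1}). Uniqueness in the full space $C([0,T_0];H^2)$ and continuous dependence on $w_0$ (with $v$ fixed) follow in the routine way from the contraction estimate and the linearity of the $w_0$-dependence of $S(t)w_0$.

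The only mild obstacle is bookkeeping rather than substance: one must verify that each of the eight summands in $G(v,w)$ is controlled by either the pure-$w$ estimates from Section~\ref{sec4} or by one of the four smoothing estimates in (\ref{smoothing}), and that the resulting $T_0$ depends only on $\|w_0\|_{H^2}$ and $M=\|v\|_{C([0,T];H^s)}$, both of which are finite and fixed a priori. No new harmonic analysis is required beyond what has already been established; the argument is a direct adaptation of the proof of Theorem~\ref{mainTh1}, with the smoothing estimates playing the role of absorbing the loss of regularity of $v$ by the one- or three-derivative gain of $\tau(\partial_x)$ and $\psi(\partial_x)$.
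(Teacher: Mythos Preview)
Your proposal is correct and is exactly the argument the paper has in mind: the paper itself does not write out a detailed proof of this proposition, merely remarking that it is ``straightforward to use the smoothing estimates (\ref{smoothing})'' to make $\Phi$ a contraction on a small ball in $C([0,T_0];H^2)$, and your write-up is precisely that filling-in. The only minor slip is a counting one (there are seven summands in the integrand of (\ref{w-integral}), not eight), which does not affect the argument.
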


It remains only to show that the local in time solution $w$ of (\ref{w-1}) can be continued to the entire time interval $[0,T]$.  
This in turn will be settled as soon as  {\it a priori} bounds on $w$ in $H^2$ 
are provided which are  valid on $[0,T]$.     
To see such a bound obtains, 
multiply  equation (\ref{w-1}) by $w$, integrate over $\R$ and  integrate by parts in the spatial variable to obtain   
\begin{equation}\label{eqomega}
\begin{split}
\frac12 \frac{\partial}{\partial t} & \int_{\mathbb{R}} \left(w^2  +\gamma_1 w_x^2+\delta_1 w_{xx}^2\,\right)dx \,=\,
\frac32\int_{\mathbb{R}} v w w_x dx-2\gamma \int_{\mathbb{R}}(v w)_{x}w_{xx}\,dx\\
& \qquad -2\gamma \int_{\mathbb{R}}\,v_x (w_x)^2\, dx-\frac38 \int_{\mathbb{R}}v^2 w\,w_{x}\, dx-\frac38 \int_{\mathbb{R}}\,v\, w^2 \, w_x\,dx.
\end{split}
\end{equation}
The intermediate computations are justified as before by use of the continuous dependence results in $H^2$ for $w$ and $H^1$ for $v$.  Let $\mathcal{X}(t):=\int_{\mathbb{R}} \left(w^2 +\gamma_1 w_x^2+\delta_1 w_{xx}^2\,\right)dx$.  Then, $\mathcal{X}(t)$ is equivalent to the
square of the 
$H^2$--norm of $w(\cdot,t)$.   

The next task is to obtain an upper bound on the right-hand side of (\ref{eqomega}) in terms of $\|w\|_{H^2}$ and $\|v\|_{H^1}$.  
The fact that $\|w\|_{L^\infty}$ and   $\|w_x\|_{L^\infty}$ are both bounded by $\|w\|_{H^2}$ and elementary estimates imply that
\begin{equation} \label{inequality1}
\begin{split}
\frac{\partial  \mathcal{X}(t)}{\partial t}  & \leq C\Big(\big(\|v\|_{H^1} + \|v\|^2_{H^1}\big)\|w\|^2_{H^2}
 + \|v\|_{H^1}\|w\|^3_{H^2}\Big) \\ 
 & \leq C\Big(\big(\|v\|_{H^1} + \|v\|^2_{H^1}\big)\mathcal{X}(t) 
+ \|v\|_{H^1}\mathcal{X}(t)^{\frac32}\Big).
\end{split}
\end{equation}
Recall that $\|v(\cdot,t)\|_{H^1}  \leq 2\|v_0\|_{H^1}$ on the entire interval $[0,T]$.  In consequence, (\ref{inequality1}) 
can be extended thusly;
\begin{equation}  \label{inequality2}
\frac{\partial  \mathcal{X}(t)}{\partial t} \, \leq \, 2C\|v_0\|_{H^1}\left(\mathcal{X}(t) + \mathcal{X}(t)^{\frac32} \right). 
\end{equation}
Notice that, because of   (\ref{approximation}),  
\begin{equation}  \label{v0}
\|v_0\|_{H^1} = o(\epsilon^{s-1}) = \nu(\epsilon)\epsilon^{s-1} \quad  
   {\rm where} \quad \nu(\epsilon) \to 0 \quad  {\rm as} \quad \epsilon \to 0.
\end{equation}
If $\Sigma(t)$ is the solution of 
\begin{equation} \label{sigma}
\frac{d\Sigma}{dt} = 2C\|v_0\|_{H^1}\left(\Sigma(t) + \Sigma(t)^{\frac32}\right)
\end{equation}
with $\Sigma(0) = \mathcal{X}(0)$, then a Gronwall-type argument implies that $\mathcal{X}(t) \leq \Sigma(t)$ 
for all $t$ for which $\Sigma$ is finite.    The solution of (\ref{sigma}) is 
\begin{equation} \label{soln}
\sigma(t) = \frac{\sigma(0)e^{Ct\|v_0\|_{H^1}}}{1 - \sigma(0)\left(e^{Ct\|v_0\|_{H^1}} - 1\right)} \leq 
\frac{\sigma(0)e^{CT\|v_0\|_{H^1}}}{1 - \sigma(0)\left(e^{CT\|v_0\|_{H^1}} - 1\right)},
\end{equation}
provided the right-hand side is positive and finite, where $\sigma(t)^2 = \Sigma(t)$.  Of course, as long as $0 \leq y  \leq 1$, 
say, then $e^y - 1 \leq ey$.  Since $T$ is fixed and $\|v_0\|_{H^1}$ is small for small values of $\epsilon$, the right-hand side
of (\ref{soln}) may be bounded above by
\begin{equation}  \label{w-bound}
\frac{\sigma(0)e^{CT \|v_0\|_{H^1}}}{1 - CTe\sigma(0)\|v_0\|_{H^1}}.
\end{equation}
 The latter will provide the desired upper bound needed to continue the solution $w$ to the entire time interval $[0,T]$
as soon as 
\begin{equation} \label{small}
\sigma(0) \|v_0\|_{H^1} < \frac1{CeT}.
\end{equation}  
   As $\sigma(0)$ is equivalent to the $H^2$--norm of $w_0$,  (\ref{approximation}) implies that 
$
\sigma(0) \, \leq \, C\epsilon^{s-2}.
$
Combining this with (\ref{v0}), it is seen that 
$$
\sigma(0)\|v_0\|_{H^1} \, = \, o\left(\epsilon^{2s - 3} \right) \quad {\rm as} \quad \epsilon \downarrow 0.
$$ 
Consequently, if $s \geq \frac32$ and $\epsilon$ small enough, (\ref{small}) is valid and the result is proved.  
\end{proof}

\section{Parameter Restrictions}\label{sec5}

The class of partial differential equations (\ref{eq1.6}) are all formally equivalent models for long-crested, small amplitude, long waves 
on the surface of an ideal fluid over a flat bottom.   The hope is that they approximate solutions of the
full water-wave problem for an ideal fluid with an error that is of order $O\left(\beta^3 t\right)$ over a time scale at least of 
order $O\left(\beta^{-2}\right)$.   Rigorous theory to this effect, but only on the shorter, Boussinesq 
time scale $O\left(\beta^{-1}\right)$, is available for the lower order, unidirectional models (\ref{kdvbbm}) 
by combining results in   \cite{AABCW},
  \cite{BCL1} and \cite{BPS1}. 

It deserves remark that various models already existing in the literature are
specializations of the class of models displayed in \eqref{eq1.5}.  For example,
the model derived in \cite{dullin} in it's zero surface tension limit, and see also \cite{johnson} and 
\cite{marchant}, appears by taking $\rho = b + d$ and an appropriate 
choice of $\lambda_1$.  As will be clear momentarily, 
this model, like the one in \cite{olver}, is not Hamiltonian.   

Despite the fact that the models are formally equivalent, they may have very different mathematical
properties.  When it comes time to choose one of the models for use in a real-world situation, one naturally
wants to have good mathematical properties at hand.  This was discussed in some detail in \cite{BCS1} and 
\cite{BCS2} in the context of the lower-order system (\ref{b1.1})--(\ref{abcd}).  

In the present account, theory has been developed that implies the local well-posedness of the initial-value problem 
for a subclass of our unidirectional models.  Local well-posedness is a minimal requirement for the use of such
models in practice.  We also found an additional condition which allows the local theory to be continued 
indefinitely.  It is especially noteworthy that this condition implies the equation to have a Hamiltonian structure.
The full water wave model also has a Hamiltonian structure, and experience indicates that maintaining such a 
Hamiltonian arrangement in approximate models is likely to lead to better qualitative agreement 
with the full model.  Hence, our recommendation is to use the special versions of our equation 
displayed in (\ref{eta}).

Interest is now turned to specifying conditions under which the various restrictions on the coefficients $\gamma_1$, 
$\delta_1$ and $\gamma$ that cropped up during
our analysis are valid.  Recall that these conditions were 
\begin{equation} \label{param}
\gamma_1\,>\,0, \quad \delta_1 \, > \, 0\quad {\rm and} \quad \gamma = \frac7{48}
\end{equation}
(see Theorem \ref{mainTh2}).  The models satisfying these three conditions appear to have a more satisfactory mathematical theory.    It is worth reiterating that comparison results   
indicating 
that such models approximate solutions of the full water-wave problem rely upon 
smoothness (see \cite{AABCW}, \cite{BCL1}  and \cite{DL}, for example).   The fact that, with the
restrictions (\ref{param}), the model is globally well-posed in smooth function classes is therefore
 potentially very useful.  
 
 
\subsection{\it Hamiltonian Structure}  The Hamiltonian structure displayed in Remark \ref{hamiltonian} is the key to our global well-posedness results.   It also engenders other good features in the model which are not entered upon here.

So far, the condition $\gamma = \frac{7}{48}$ is the only one for which we know existence of a Hamiltonian structure.  Looking at the formula for $\gamma$ given in (\ref{gamas})  and demanding that $\gamma =\frac7{48}$  implies that
\begin{equation}\label{theta-6ml}
\frac1{24}\big[5-9(b+d)+9\rho\big] = \frac7{48}.
\end{equation}
Thus, the Hamiltonian structure is guaranteed if one chooses $\rho$ by the  formula
\begin{equation}\label{rho-2}
\rho = b+d-\frac16,
\end{equation}
which is exactly the one  advertised in \eqref{correction-1}.
In terms of the fundamental parameters $\theta, \lambda$ and $\mu$, $\rho$ given in \eqref{rho-2} is written as
\begin{equation}\label{rho-3}
\rho =  \frac16\big[1-\big(\theta^2-\frac13\big)\lambda-3\big(1-\theta^2)\mu\big] = \frac16-(a+c),
\end{equation}
  where the relation $a+b+c+d=\frac13$ has been used.

\subsection{\it Well-Posedness}    As mentioned already, equation (\ref{eq1.6}) is easily seen to be linearly ill-posed in Sobolev 
classes unless the parameters $\gamma_1$ and $\delta_1$ are positive.  These are the more important of the three restrictions in \eqref{param}  as far as 
well-posedness is concerned.  We fix the value of $\rho = b + d -\frac16$ given by \eqref{rho-2} for which
 $\gamma_1 = \gamma_2=\frac1{12}$. In particular, $\gamma_1 > 0$, so 
that condition is met.   In what follows, we discuss the condition $\delta_1>0$.

As noted in Remark \ref{rem-2.1}, a straightforward calculation reveals that 
\begin{equation}  \label{coefficient}
\delta_2 - \delta_1 + \frac16 \gamma_1 \, = \, \frac{19}{360},
\end{equation}
 regardless of the choice of the various fundamental parameters.   As $\gamma_1 = 1/12$, 
it is further deduced that 
\begin{equation}   \label{del2}
\delta_2 \,=\, \delta_1 +  \frac{7}{180}.
\end{equation}
Thus, the condition $\gamma = 7/48$ implies \eqref{rho-2}.   This in turn yields 
\eqref{del2}.   So, any value of $\delta_1 > 0$ may be specified as long as it is 
consistent with choices of $\theta, \lambda, \mu, \lambda_1$ and $\mu_1$.  

Using the formula (\ref{gamas}) for $\delta_1$ together with the formulas (\ref{abcd}) and (\ref{abcd1}) for the
coefficients $a, b, \cdots, c_1, d_1$ and (\ref{rho-2}) for $\rho$, a little algebra shows that in terms of the fundamental parameters 
$\theta, \lambda, \mu$, $\lambda_1$ and $\mu_1$, 
\begin{equation}\label{delta}
\begin{split}
\delta_1 =\delta_1(\theta, \lambda, \mu, \lambda_1,\mu_1)& =\frac12(b_1+d_1)-\frac14\Big(2b-\frac16\Big)\Big(\frac16-a-d\Big)-\frac14d\Big(\frac16-2a\Big)\\
&= -\frac5{48}\big(\theta^2-\frac15\big)\Big[\big(\theta^2-\frac15\big)(1-\lambda_1)+(1-\theta^2)\mu_1\Big]\\
&\quad -\frac14\Big[\Big(\theta^2-\frac13\Big)(1-\lambda)-\frac16\Big]\Big[\frac16-\frac12\Big(\theta^2-\frac13\Big)\lambda -\frac12(1-\theta^2)(1-\mu)\Big]\\
 &\quad-\frac18(1-\theta^2)(1-\mu)\Big[\frac16-\Big(\theta^2-\frac13\Big)\lambda\Big]\\
&=\frac5{48}\big(\theta^2-\frac15\big)^2\lambda_1-\frac5{48}\big(\theta^2-\frac15\big)(1-\theta^2)\mu_1+P(\theta, \lambda, \mu),
\end{split}
\end{equation}
where 
\begin{equation}\label{Poly-1}
\begin{split}
P(\theta, \lambda, \mu)&=-\frac{(3\theta^2-1)^2}{72}\lambda^2 + \frac{(3\theta^2-1)(6\theta^2-1)}{144} \lambda -\frac{(1-\theta^2)}{24}\mu -\frac{(5\theta^4-30\theta^2+14)}{240},
 \end{split}
\end{equation} 
is a polynomial  in  $\theta$, $\lambda$ and $\mu$.  A study of  (\ref{delta}) reveals that  there 
are two separate cases to consider.   \vspace{.2cm}

\noindent
{\bf Case  1:}  $\theta \in [0,1] \setminus \{ \frac{1}{\sqrt{5}}\}$. 
In this case $\delta_1>0$ if and only if
\begin{equation}\label{inel1}
\lambda_1> \frac{(1-\theta^2)\mu_1}{\big(\theta^2-\frac15\big)}-\frac{48}{5}\frac{P(\theta, \lambda, \mu)}{\big(\theta^2-\frac15\big)^2}=:\mathcal{H}(\theta, \lambda, \mu, \mu_1).
\end{equation}
Since  $\mathcal{H}(\theta, \lambda, \mu, \mu_1)$ is finite for any given values of $\theta$, $\lambda$, $\mu$ and $\mu_1$, it is always possible to choose an appropriate $\lambda_1$  such that the inequality (\ref{inel1}) holds true.
Indeed, there are many choices that work.  
\vspace{.2cm}

\noindent
 {\bf Case  2:}   $\theta =  \frac{1}{\sqrt{5}}$. In this case 
\begin{equation}\label{parabola}
\begin{split}
\delta_1\Big(\frac{1}{\sqrt{5}}, \lambda, \mu, \lambda_1,\mu_1\Big)&=P\Big(\frac{1}{\sqrt{5}}, \lambda, \mu\Big)\\
&= -\frac{1}{450} \lambda^2 -\frac{1}{1800}\lambda -\frac{1}{30}\mu-\frac{41}{1200}.
\end{split}
\end{equation}
Observe that the quadratic equation
\begin{equation}
P\Big(\frac{1}{\sqrt{5}}, \lambda, \mu\Big)=0,
\end{equation}
defines a parabola facing downward. The region in $\lambda-\mu$ space where  $\delta_1=P\left(\frac{1}{\sqrt{5}}, \lambda, \mu\right)>0$ is the shaded region inside the 
parabola shown in the  Figure 1, {\it viz.}
\begin{figure}[h!]
  \centering
      \includegraphics[width=0.3\textwidth]{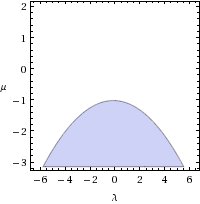}
  \caption{The region where  $P\left(\frac{1}{\sqrt{5}}, \lambda, \mu\right) > 0$ 
is shaded.  }
	\label{fig:A}
	\end{figure}



\section{The Dispersion Relation}  \label{sec-5}

  The models derived here 
depend upon choices of six parameters, which have 
been denoted $\lambda, \lambda_1, \mu, \mu_1, \theta$ and $\rho$. 
 The parameter $\theta$ has
physical significance whereas the others are modeling parameters and in principle, can take
any real value.  


 As will be seen in a moment, the linearized dispersion
relation for the class of models derived here always matches that of the full 
water-wave problem 
through second order in the small parameter $\beta$.  
More precisely, if any of these
 models are linearized about the rest state, the resulting linear 
partial differential equation has a dispersion relation relating phase speed $c$ to wave number $k$.  A brief calculation shows this to be 
\begin{equation}  
c_{model}(k) \, = \, 1 -  \big(\gamma_1 + \gamma_2\big)k^2 
+ \big(\delta_2 - \delta_1 + \gamma_1^2 
+ \gamma_1\gamma_2 \big) k^4+ \mathcal{F}k^6
\end{equation}
where $k$ is the wave number and the coefficient $\mathcal{F}$ is 
\begin{equation}   \label{F}
\mathcal{F} \, = \, \mathcal{F}(\theta,\lambda,\mu,\lambda_1,\mu_1, \rho) \, = \, 
-\gamma_1 \delta_2 -\gamma_2(-\delta_1 + \gamma_1^2) + 2 \gamma_1\delta_1 - \gamma_1^3.
\end{equation}
   As $\gamma_1 + \gamma_2 = 1/6$ holds independently of the 
choice of parameters, the second and third terms simplify, {\it viz.} 
\begin{equation}  \label{model dispersion}
c_{model}(k) \, = \, 1 -  \frac16 k^2 
+ \left(\delta_2 - \delta_1 + \frac16 \gamma_1 \right) k^4+ \mathcal{F}k^6,
\end{equation}
where the coefficient $\mathcal{F}$ will be displayed presently.
Making use of \eqref{del2} leads to the final result
\begin{equation}  \label{}
c_{model}(k) \, = \, 1 -  \frac16 k^2 
+ \left(\frac{19}{360}\right) k^4+ \mathcal{F}k^6,
\end{equation}
 regardless of the choice of the various parameters.

For the two-dimensional water wave problem displayed in (\ref{euler}), the linearized dispersion relationship is exactly
\begin{equation}\label{dis-Euler}
c_{Euler}(k) = \pm \sqrt{\frac{\tanh(k)}{k}}.
\end{equation}
For waves moving to the right, the $+$--sign is appropriate.    One recognizes 
that the Taylor expansion of the function of the right-hand side of \eqref{dis-Euler}
in the long wave regime (small wavenumber $k$) is 
$$
c_{Euler}(k) \, = \, 1 - \frac16 k^2 + \frac{19}{360} k^4 - \frac{55}{3024} k^6 + O(k^8).
$$
In consequence, all the models put forward here are seen to satisfy the full, 
linear dispersion relation through order $k^4$.   
Of course, 
if the derivation is done correctly, this has to be the case.  If one 
rescales the variables so the long wavelength assumption is measured by $\beta$ 
as in the formalities of the derivation, then one sees that the error in the linear 
part of the approximation is at least of order $\beta^3$.

It is tempting to choose the parameters $\theta, \lambda, \mu, \lambda_1, \mu_1$ 
and $\rho$ so that $\mathcal F$ matches 
 the next order in the dispersion relation  exactly, as was done at 
the lower order in \cite{minbona}. 
  Hence, if the auxiliary parameters 
are chosen so that
 \begin{equation}  \label{dispersion}
\mathcal{F}(\theta,\lambda,\mu,\lambda_1,\mu_1,\rho) = -\frac{55}{3024},
\end{equation} 
then the linear dispersion in the model would match that of the linear water wave 
problem up to and including  order $\beta^3$.   Such a choice could have a
salutory effect on the detailed accuracy of the model, though it  does not 
improve the overall formal level of approximation.  

Of course, one needs that the criteria for local well posedness continue to 
hold in the light of this choice.  A study of the formula \eqref{F} for $\mathcal F$
shows that 
\begin{equation}  \label{F1}
\begin{split}
\mathcal F \, &= \,  -\gamma_1 \delta_2 -\gamma_2\big(-\delta_1 + \gamma_1^2\big) + 2 \gamma_1\delta_1 - \gamma_1^3 \\
&= \,  -\gamma_1 \delta_2 +  \delta_1\big( \gamma_2 + 2 \gamma_1 \big) -\gamma_1^2\big(\gamma_1 + \gamma_2 \big)  \\
& = \, -\gamma_1\delta_2 + \delta_1\left(\gamma_1 + \frac16\right) -
\frac16 \gamma_1^2 \\
& = \, \gamma_1\left(\delta_1 - \delta_2 - \frac16\gamma_1 \right) + \frac16 \delta_1  \\
&= \, -\frac{19}{360} \gamma_1 + \frac16 \delta_1,
\end{split}
\end{equation}
where the facts that $\gamma_1 + \gamma_2 = 1/6$ and the relation 
\eqref{coefficient} have been used.   It is interesting to know whether or not
 the relation 
\eqref{dispersion}, which implies the model dispersion relation  agrees with
the exact linear dispersion relation up to order $k^6$, is consistent with the conditions  
 $\delta_1 > 0, \gamma_1 > 0$ and $\gamma = \frac{7}{48}$ implying 
global well posedness.  The condition $\gamma =  \frac{7}{48}$ requires
 that $\rho = b + d - \frac16$ as in \eqref{rho-2}.  This in turns implies 
that $\gamma_1 = \frac{1}{12} > 0$.  
That the parameters can be chosen so that \eqref{dispersion} holds is clear 
upon consulting the formula \eqref{delta} for $\delta_1$, which already 
presumes that $\rho = b + d - \frac16$.   For example, 
choose $\theta^2 \in (\frac15,1)$, and fix $\lambda, \mu$ and $\mu_1$.  
Then $\delta_1$ is seen to have the form 
$$
\delta_1 \, = \, M + N\lambda_1
$$ 
where $N > 0$.   Clearly any value of  $\delta_1$ can be achieved by
a suitable choice of $\lambda_1$ and so any value of $\mathcal F$ 
can be achieved under the restriction $\rho = b + d - \frac16$.  
However, notice that  \eqref{F1} and \eqref{dispersion} 
yield 
\begin{equation}\label{est-d1}
\delta_1 \, = \, 6\left(\frac{19}{360}\frac{1}{12} - \frac{55}{3024}    \right)
 \, = \,  -\frac{139}{1680} \, < \, 0.
\end{equation}
Hence, the requirement of Hamiltonian structure together with  local well-posedness 
are not consistent with the model approximating the dispersion relation 
at the next order without considering  $O(\alpha^2, \beta^2, \alpha\beta)$ terms in \eqref{kdvbbm-2} and a new correction parameter like $\rho$.

\section{Concluding Remarks}  \label{sec-6}

Derived here is a class of unidirectional models for long-crested water waves that are formally second-order correct.
Basic analysis of the pure initial-value problem for our models has been developed.  A local well-posedness 
theory in relatively weak spaces is established under  conditions on the two 
parameters $\delta_1$ and $\gamma_1$
that appear in the model, and which depend upon the other parameters.  
Global well-posedness is only established in case the equation has a special, Hamiltonian
structure.  Conditions under which both aspects obtain are given.  

A comment is deserved about the focus maintained throughout
 on unidirectional models.  
Boussinesq himself 
understood that his one-way model was simpler than the coupled pair of two-way 
models that he first derived.     It was also simpler than a second-order in time,
unidirectional model  
equation he had derived earlier.  In both these instances, a
 modern perspective on this issue is that the 
undirectional model 
can be posed with half the auxiliary data needed to initiate the coupled system.  
However,  unidirectionality places a severe limitation on 
the wave motion when it is posed as an initial-value problem.  More precisely, 
a strict relationship between the initial wave profile and the velocity field is 
implied.  On the other hand,
it is known that for Boussinesq-type systems, if the initial disturbance is suitably 
localized and small, then on certain temporal scales, the disturbance will decompose into 
a left- and a right-going wave, each of which satisfy approximately a unidirectional 
equation  (see \cite{SchneiderWayne}, \cite{BCL1}).   Finally,  it is worth noting that even  fairly steep beaches do not reflect all that much energy (see \cite{MP}). 
 For very gently shelving beaches such as obtain in many nearshore zones,
 the reflection is negligible as regards its effect on shaping and erosive 
processes. 
Hence, unidirectional models seem to suffice in such circumstances.    

  Finallly, we remark that when 
 choosing  the depth parameter $\theta$, it is a good idea if it is taken well inside the interval $[0,1]$.   While the horizontal velocity does not appear in the unidirectional 
model, a formal corollary of its derivation is a prediction of the horizontal velocity at the depth $1-\theta^2$.  This is comprised of the formula (\ref{eq1.3}) 
expressing the horizontal velocity in terms of the functions $A, B, C, D$ and $E$
together with the forms (\ref{first-orderAB}) determined for $A$ and $B$ and those  for $C, D$ and $E$.   It is hard to measure 
the horizontal velocity very close to the free surface, 
while in actual fact, there is no velocity on the bottom because of the viscous boundary
layer.  Typical velocity measurements in laboratory and field situations are made somewhere in the middle of the water column.


\section*{Acknowledgments}
The authors are thankful to an anonymous referee who pointed out an error 
in the original version of the manscript and whose comments helped to improve the manuscript.  JB  thanks the Instituto de Matem\'atica, Estat\'{\i}stica e Computa\c c\~ao Cient\'{\i}fica  (IMECC) at the State University of Campinas, S\~ao Paulo and the University of Illinois 
at Chicago 
for support.  He also held a FAPESP 2016/01544-2 grant at IMECC during the final stages of the work.  Part of the work was done while visiting 
 the National Center for 
Theoretical Sciences (NCTS) at Taiwan National University. He is grateful for the
excellent support and fine working conditions.   
XC also thanks IMECC for support as a longer-term visitor
under the project FAPESP 2012/23054-6, and support from CNPq grants
304036/2014-5 and 481715/2012-6.  MP acknowledges support from FAPESP 2012/20966-4, 
FAEPEX 1486/12 and CNPq 479558/2013-2 \& 305483/2014-5.  MP and MS thank the Department of Mathematics, 
Statistics and Computer Science at the University of Illinois at Chicago for hospitality and
 support during part of this collaboration.


\end{document}